\newtheorem{proposition}{Proposition}
\newtheorem{lemma}{Lemma}
\theoremstyle{definition}
\newtheorem{remark}{Remark}
\newtheorem{example}{Example}
\newcommand{\guillemets}[1]{``#1''}
\newcommand{\mat}[2]{\left(\begin{array}{#1} #2 \end{array} \right)}
\newcommand{\tr}[1]{\mathrm{tr}{#1}}
\newcommand{\var}[1]{\mathrm{Var}{#1}}
\newcommand{\cov}[1]{\mathrm{Cov}{#1}}
\newcommand{\alex}[1]{{\color{black} #1}}
\begin{document}

\title{Spectral identification of networks using sparse measurements}
\author{A. Mauroy and J. Hendrickx}

\maketitle

\begin{abstract}
We propose a new method to recover global information about a network of interconnected dynamical systems based on observations made at a small number (possibly one) of its nodes. In contrast to classical identification of full graph topology, we focus on the identification of the spectral graph-theoretic properties of the network, a framework that we call \emph{spectral network identification}.

The main theoretical results connect the spectral properties of the network to the spectral properties of the dynamics, which are well-defined in the context of the so-called Koopman operator and can be extracted from data through the Dynamic Mode Decomposition algorithm. These results are obtained for networks of diffusively-coupled units that admit a stable equilibrium state. For large networks, a statistical approach is considered, which focuses on spectral moments of the network and is well-suited to the case of heterogeneous populations. 

Our framework provides efficient numerical methods to infer \emph{global} information on the network from sparse \emph{local} measurements at a few nodes. Numerical simulations show for instance the possibility of detecting  the mean number of connections or the addition of a new vertex using measurements made at one single node, that need not be representative of the other nodes' properties.
\end{abstract}

\section{Introduction}

A major problem in the context of complex networks of interacting dynamical systems, which has been considered for many years, is to predict the collective dynamics when the network topology is known. However, in many situations, it is often desirable to address the reverse problem of inferring the topology of the network from available data capturing the collective dynamics. This reverse problem is relevant in fields such as biology (e.g. reconstructing regulatory networks from gene expression data), neuroimaging (e.g. revealing the structural organization of the brain), and engineering (e.g. localizing failures in power grids or computer networks), to list a few. Network identification problems have received increasing attention over the past years, and the topic is actively growing in nonlinear systems theory. See e.g. the recent survey \cite{Timme_review}. Many methods have been developed, exploiting techniques from various fields: linearization \cite{Sauer_net_ident, Timme_identification}, velocities estimation \cite{Pikovsky_net_ident, Timme2_net_ident}, adaptive control \cite{Yu_net_estimation}, steady-state control \cite{Yu_Parlitz_steady_state}, optimization \cite{net_ident_opti}, compressed sensing \cite{Sauer_net_ident,net_ident_compressed_sens}, stochastic methods \cite{Ren_net_ident}, etc. These methods provide the structural (i.e. exact) connectivity of the underlying network and exploit to do so the dynamical nature of the individual units, which is often known, at least partially. In contrast, correlation-based methods using statistical measures \cite{network_ident_fMRI} or information-theoretic measures \cite{net_ident_info_theory} have also been developed, but they can only infer the effective (i.e. statistical) connectivity of the network.

Network identification methods developed in the framework of dynamical systems theory are usually not well-suited to the analysis of real networks such as biological networks, social networks, etc. Most of them are invasive, requiring the modification of the network connectivity or dynamics. In addition, some of them cannot be used \guillemets{offline} for data analysis, since they require to interact dynamically with the network. More importantly, all the methods proposed so far for full network reconstruction require measurements at all the nodes of the network. Partial measurements have been considered in \cite{Goncalves} in the context of linear time-invariant systems for a partial reconstruction of the network between the measured states, and yet the authors showed that the problem cannot be solved without additional information on the system. 
It can actually be shown that measuring all nodes is necessary for a full network reconstruction, and this is usually out of reach in large real networks.
Indeed, the number of sensors is limited and typically (much) smaller that the number of nodes. Some nodes of real networks might also not be accessible, or the only available information might be the averaged activity of a group of nodes lying in a given region of the network (e.g. electrical activity in a region of the brain). All these limitations motivate the network identification framework developed in this paper, which overcomes them.

In this work, we take the view that identifying the exact complete topology of large networks is not only practically impossible, as mentioned above, but also often unnecessary.  The presence or absence of an edge between two specific nodes is for instance often only marginally relevant when analyzing the global structure of a large network. 
For this reason, we focus instead on the identification of the spectral properties of the network, a framework that we call \emph{spectral network identification}. Note that the idea of estimating the spectral properties of networks has been considered in the control theory community (e.g. \cite{Banaszuk_eigenvalues, Franceschelli, Kibangou}), but in the case of specific (linear) consensus dynamics imposed at each node. On one hand, spectral properties do not reveal the exact full network topology (indeed, we cannot \guillemets{hear the shape} of a drum \cite{shape_drum}), so that the identification objective has been relaxed. On the other hand, they are a central theme of study in spectral graph theory \cite{Chung_book} ---where they are typically defined through the so-called Laplacian matrix associated with the network. They are shown to provide relevant information on the global network structure such as mean, minimum and maximum node degree, and connectivity, and they are reflected on the network dynamics, see e.g. \cite{Schaub}. For instance, the second smallest eigenvalue of the Laplacian matrix ---also called algebraic connectivity--- is related to the speed of information diffusion in the network (e.g. opinion propagation, spreading of epidemics) and plays a key role in studying network synchronization. More generally, the spectral properties of the network provide simple markers capturing the global network structure. These spectral markers can be used to detect a pathology or a fault and to compare different networks.

While classical full topology identification requires measurements at all the nodes of the network, we show that spectral network identification requires only sparse measurements in the network. This can be roughly explained by the fact that each node of a strongly connected network \guillemets{feels} the influence of all the other nodes. With the method developed in this paper, measurements can therefore be performed on a very small subset of nodes (e.g. only one in some cases) that might not be representative of the whole set of nodes. They can also be defined by a possibly nonlinear function of the states of several nodes, such as the average dynamics of a group of units. Moreover, the proposed method is not invasive and can be used offline.

Spectral properties of (nonlinear) dynamics are well-defined in a framework based on the so-called Koopman operator \cite{Mezic} and can be extracted from data through numerical methods such as the Dynamic Mode Decomposition (DMD) algorithm \cite{Schmid,Tu}. In this context, our main theoretical contribution is to connect these spectral properties of the collective network dynamics, which are measured, to the spectral properties of the network, which are to be inferred. These results are obtained in the case of a diffusive coupling for networks reaching a synchronized equilibrium, where the states of all units converge to the same value, a behavior which can be observed with excitable neurons, cardiac cells, opinion dynamics, and epidemics. For small networks, exact spectral identification is achieved. For large networks, a statistical approach is proposed, which focuses on spectral moments and is well-suited to the case of heterogeneous populations. \alex{With a few sparse measurements in the network, this framework allows} to estimate the average number of connections, detect the addition of a node to the network, and measure whether two units influence each other.

The rest of the paper is organized as follows. In Section \ref{sec:problem_stat}, the problem of spectral network identification is introduced. In Section \ref{sec:exact}, theoretical results are presented in the case of linear and nonlinear networks. At the end of the section, we also discuss the limitations encountered for the exact spectral identification of non-identical units. Section \ref{sec:stat} focuses on large networks and provides statistical results related to the spectral moments of the Laplacian matrix, which are related to the statistical distribution of the node degrees. Numerical aspects of the methods are discussed in Section \ref{sec:numerical} and illustrated with several applications in Section \ref{sec:examples}. Concluding remarks and perspectives are given in Section \ref{sec:conclu}.

\section{Problem statement}
\label{sec:problem_stat}

\subsection{Classical vs spectral network identification}

A networked dynamical system consists of a set of $n$ interconnected dynamical systems (or units) interacting on a (weighted) graph $\mathcal{G}=(V,E,w)$. The system is deterministic since no stochastic perturbation is considered. Each unit is attached to a node (or vertex) $k \in V$ and is directly influenced by another unit $i$ if $(i,k)$ is an edge of the graph, i.e. $(i,k) \in E \subseteq V \times V$. The strength of the interaction between units is determined by the function $w:E \to \mathbb{R}^+$ which assigns a weight to each edge. The (weighted) degree $d_i$ of a vertex $i$ is given by
\begin{equation*}
d_i = \sum_{\substack{k=1}}^n w(i,k)
\end{equation*}
where, with a slight abuse of notation, $w(i,k)=w((i,k))$ if $(i,k) \in E$ and $w(i,k)=0$ if $(i,k) \notin E$. We assume $(i,i) \notin E$ for all $i \in V$. The graph $\mathcal{G}$ is represented by its (weighted) adjacency matrix $W$ defined with the entries
\begin{equation*}
W_{ij} = w(i,j)\,.
\end{equation*}
Alternatively, the graph is also described by the (weighted) Laplacian matrix
\begin{equation*}
L=D-W\,,
\end{equation*}
with the degree matrix $D=\mathrm{diag}(d_1 \, \cdots \, d_n)$. In the following, we consider graphs that can be weighted and directed, i.e. $(i,j) \in E$ does not imply $(j,i) \in E$, so that $W \neq W^T$ and $L \neq L^T$ in general.\\

The networked dynamical system is completely defined by
\begin{enumerate}
\item[(i)] the graph $\mathcal{G}$;
\item[(ii)] the local dynamics of the states $x_k \in \mathbb{R}^m$, $k\in V$, of the units attached to the vertices;
\item[(iii)] the type of coupling between pairs of interacting units.
\end{enumerate}
Here we are interested in the following network identification problem: under the assumption that (ii) and (iii) are known, infer the graph topology (i) from measurements of the state of the units. In particular, classical and spectral network identification problems are defined as follows.
\begin{itemize}
\item[A.] \textbf{Classical network identification.} Suppose that (ii) and (iii) are known. From measurements of the states of \emph{all the units} of the network, infer the set of edges $E$ and the weight function $w$.
\item[B.] \textbf{Spectral network identification.} Suppose that (ii) and (iii) are known. From $p \ll n$ measurements of the states of \emph{a small subset $\bar{V} \subset V$ of units}, estimate the spectrum $\sigma(L)$ of the Laplacian matrix $L$ (i.e. the Laplacian eigenvalues) or the first spectral moments $\mathcal{M}_k(L)=\frac{1}{n}\sum_{\lambda \in \sigma(L)} \lambda^k$ in the case of large networks (see Section \ref{sec:spectral_graph_th}).
\end{itemize}
In this paper, we will develop the framework of spectral network identification. We focus on networked systems that admit a stable equilibrium corresponding to the synchronization of the units. We also make the standing assumption that the units interact through a diffusive coupling. 

\subsection{What does spectral information reveal about the graph?}
\label{sec:spectral_graph_th}

In contrast to classical network identification, spectral network identification only requires sparse measurements in the network. The price to pay is the relaxed objective of getting only the Laplacian eigenvalues. Although this spectral information does not reveal the complete graph structure, it captures important topological properties of the network. We will not review the vast literature related to spectral graph theory (we refer the interested reader to \cite{Chung_book}), but provide here some basic results connecting the Laplacian spectrum to the topological properties of the graph.

In the case of a connected graph, the second smallest eigenvalue $\lambda_2$---called algebraic connectivity---captures the connectivity of the graph; see also the Cheeger inequality in undirected graphs. The algebraic connectivity is related to the time constant of the dominant dynamics and to the speed of information propagation in the network. It also provides a bound on the diameter of the graph, i.e. the longest path between any pair of vertices \cite{Mohar}. Moreover, the algebraic connectivity $\lambda_2$ and the spectral radius $\lambda_n$ (i.e. the largest eigenvalue) can be used to derive bounds on the minimal and maximal vertex degrees $d_{min}$ and $d_{max}$. In particular, for an undirected graph, we have \cite{Fiedler}
\begin{equation}
\label{d_min_max}
d_{min} \geq \frac{n-1}{n} \lambda_2 \qquad d_{max} \leq \frac{n-1}{n} \lambda_n \,.
\end{equation}

In the case of large graphs, it is convenient to consider the spectral moments 
\begin{equation}
\label{spec_moments}
\mathcal{M}_k(L)=\frac{1}{n}\sum_{\lambda \in \sigma(L)} \lambda^k = \frac{1}{n} \tr (L^k) \quad k \in \mathbb{N} \\
\end{equation}
which are related to the moments of the degree distribution. The first spectral moment is equal to the mean vertex degree $\mathcal{D}_1(\mathcal{G}) \triangleq \frac{1}{n} \sum_i d_i$, i.e.
\begin{equation}
\label{mean_deg}
\mathcal{M}_1(L)= \mathcal{D}_1(\mathcal{G})\,,
\end{equation}
and the first and second spectral moments give bounds on the quadratic mean of the degree distribution $\mathcal{D}_2(\mathcal{G}) \triangleq \frac{1}{n} \sum_i d_i^2$:
\begin{equation}
\label{quad_mean_deg}
\max\left((\mathcal{M}_1(L))^2,\frac{\mathcal{M}_2(L)}{2}\right) \leq  \mathcal{D}_2(\mathcal{G}) \leq \mathcal{M}_2(L)\,.
\end{equation}
The equality \eqref{mean_deg} is trivial and a short proof of the inequalities \eqref{quad_mean_deg} is given in Appendix \ref{sec:app_proofs}. In the case of undirected and unweighted graphs, other relationships can be derived, which link the spectral moments of $L$ to local structural features of the network \cite{Preciado}.

\section{Exact spectral identification}
\label{sec:exact}

In this section, we develop the spectral network identification framework in the case of identical units. We first consider linear systems and then extend the results to nonlinear dynamics. The main results of this section provide the exact connection between the spectral properties of the collective dynamics and the spectral properties of the network.

\subsection{Linear systems with identical units}
\label{sec:lin_ident}

Consider a network of $n$ identical units that are each described by $m$ states evolving according to the linear dynamics
\begin{equation}
\label{local_dyn}
\begin{array}{rclc}
\dot{x}_k & = & A x_k + B u_k & \qquad x_k \in \mathbb{R}^m \\
y_k & = & C^T x_k & \qquad y_k \in \mathbb{R}
\end{array}
\qquad k \in \{1,\dots,n\}
\end{equation}
with $A\in \mathbb{R}^{m \times m}$, $B \in \mathbb{R}^{m \times 1}$, and $C \in \mathbb{R}^{m \times 1}$ (which are assumed to be known). The interaction between the units is given by the diffusive coupling
\begin{equation}
\label{diff_coupling}
u_k=\sum_{j=1}^n W_{kj} (y_k-y_j) \,.
\end{equation}
Considering the state vector $X=[x_1 \dots x_n]^T \in \mathbb{R}^{mn}$, we have
\begin{equation*}
\dot{X}=(I_n \otimes A - L \otimes BC^T)X
\end{equation*}
where $I_n$ is the $n \times n$ identity matrix and $L \in \mathbb{R}^{n \times n}$ is the Laplacian matrix. We denote
\begin{equation}
\label{K}
K \triangleq I_n \otimes A - L \otimes BC^T \in \mathbb{R}^{mn \times mn}
\end{equation}
and the solution of $\dot{X}=K X$ is given by
\begin{equation*}
X(t)=\sum_{j=1}^{mn} V_j \, e^{\mu_j t}
\end{equation*}
where $V_j$ is an eigenvector of $K$ and $\mu_j$ is the corresponding eigenvalue. Note that $V_j$ depends on the initial condition $X(0)$. We assume that $A$, $B$, and $C$ are so that the units synchronize, i.e. $\lim_{t\rightarrow \infty} X(t)=[0 \dots 0]^T$, or equivalently $\Re\{\mu_j\} < 0$ for all $j$.

In the context of spectral network identification, measurements are performed through the linear observation function $f(X)=Q^T X \in \mathbb{R}^p$, where $Q \in \mathbb{R}^{mn \times p}$ is a sparse matrix and $p\ll n$ is the number of measurements. (Note that the case of a nonlinear observation function will be treated together with the case of nonlinear dynamics in Section \ref{sec:nonlinear}.) It is clear that all eigenvalues $\mu_j$ of $K$ appear in the expression of the measurement
\begin{equation}
\label{state_evol}
f(X(t))=\sum_{j=1}^{mn} Q^T V_j \, e^{\mu_j t}
\end{equation}
(provided that $f(V_j)=Q^T V_j\neq 0$ for all $j$ \footnote{This condition is equivalent to the observability of the pair $(K,Q)$, i.e. $\textrm{rank}([Q \,\, K^TQ \, \cdots \, (AK^T)^{nm-1}Q])=nm$.}) and this is true even if only one state of one vertex is measured, i.e. $f(X)=(e^k)^T X=X_k \in \mathbb{R}$, where $e^k$ is the $k$th unit vector. Therefore, estimations $\tilde{\mu}_j$ of the eigenvalues $\mu_j$ can be computed from snapshots of \eqref{state_evol}. To do so, one can use the so-called Dynamic Mode Decomposition (DMD) algorithm \cite{Schmid,Tu}. The algorithm is described in detail in Appendix \ref{app_algo}, and its numerical implementation is discussed in Section \ref{sec:numerical}. The efficiency and accuracy of the algorithm will be illustrated in the sequel through several examples.

\begin{remark} The DMD algorithm works in practice with a set of time series obtained with several initial conditions. We therefore assume that measurements are performed while the network is reset several times to a state different from its equilibrium point. Accurate results can also be obtained when only the states of a group of vertices are reset, provided that this group is large enough. In the sequel, we will however consider that all the states of the network are reset (as it happens for instance in real networks of excitable neurons or cardiac cells). Note also that Section \ref{sec:numerical} provides a way to decrease the number of time series used with the DMD algorithm. \hfill $\diamond$
\end{remark}

What remains to show is that the spectrum $\sigma(L)$ of $L$ can be inferred from the (measured) spectrum $\sigma(K)$ of $K$ when the local dynamics (i.e. $A$, $B$, and $C$) are known. The following lemma provides a relationship between $\sigma(K)$ and $\sigma(L)$.
\begin{lemma}
\label{lem_main}
For $K=I_n \otimes A - L \otimes BC^T$, we have
\begin{equation}
\label{spec_K}
\sigma(K)=\bigcup_{\lambda \in \sigma(L)} \sigma(A-\lambda BC^T) \,.
\end{equation}
The $mn$ eigenvectors of $K$ are given by 
\begin{equation}
\label{eigenvector_K}
V=v \otimes w\,,
\end{equation}
with $L v=\lambda v$ and $(A-\lambda BC^T) w = \mu w$.
\end{lemma}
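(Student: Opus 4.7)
The plan is a direct verification for one inclusion, and a Schur/triangularization argument for the other.

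First I would verify by direct computation using the mixed-product property of the Kronecker product that if $Lv = \lambda v$ and $(A-\lambda BC^T)w = \mu w$, then $v \otimes w$ is an eigenvector of $K$ with eigenvalue $\mu$:
\begin{equation*}
K(v\otimes w) = (I_n v)\otimes(Aw) - (Lv)\otimes(BC^T w) = v\otimes(Aw - \lambda BC^T w) = \mu(v\otimes w).
\end{equation*}
This immediately yields the inclusion $\bigcup_{\lambda\in\sigma(L)}\sigma(A-\lambda BC^T) \subseteq \sigma(K)$ and simultaneously establishes the eigenvector formula \eqref{eigenvector_K}.

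For the reverse inclusion, since $L$ need not be symmetric (recall the graph may be directed), I would avoid assuming diagonalizability and instead use a Schur decomposition $L = U T U^{-1}$, where $T$ is upper triangular with the eigenvalues $\lambda_1,\dots,\lambda_n$ of $L$ on its diagonal. Applying the mixed-product property again,
\begin{equation*}
K = (U\otimes I_m)\bigl(I_n\otimes A - T\otimes BC^T\bigr)(U^{-1}\otimes I_m),
\end{equation*}
so $K$ is similar to $I_n\otimes A - T\otimes BC^T$. Because $T$ is upper triangular, the latter, read as an $n\times n$ block matrix with $m\times m$ blocks, is block upper triangular with diagonal blocks exactly $A - \lambda_i BC^T$. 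The spectrum of a block triangular matrix is the union of the spectra of its diagonal blocks, which gives $\sigma(K) \subseteq \bigcup_{i}\sigma(A-\lambda_i BC^T)$ and closes the identity \eqref{spec_K}, counted with multiplicity.

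The only subtle point---and what I view as the main obstacle---is the eigenvector claim. The tensor formula \eqref{eigenvector_K} provides at least one eigenvector per eigenvalue, but to assert that \emph{all} $mn$ eigenvectors of $K$ take the product form one needs some non-degeneracy: for instance that $L$ is diagonalizable and, for every $\lambda\in\sigma(L)$, the block $A-\lambda BC^T$ is also diagonalizable. Under such an assumption the block-triangular reduction above collapses to a block-diagonal one, and a full eigenbasis of $K$ is obtained by gluing eigenvectors $w$ of each $A-\lambda BC^T$ with the corresponding Laplacian eigenvector $v$ through $v\otimes w$. I would therefore state the eigenvector part under this generic hypothesis (or note it as a tacit assumption), while emphasizing that the spectral identity \eqref{spec_K}, which is what is actually needed for the identification procedure, holds without any diagonalizability assumption thanks to the Schur argument.
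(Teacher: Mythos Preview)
Your argument is correct, and the Schur-triangularization step is a genuinely different route from the paper's. The paper also begins with the direct verification that $v\otimes w$ is an eigenvector, but then, instead of triangularizing $L$, it argues by \emph{counting}: when each $A-\lambda BC^T$ is diagonalizable, the tensor products already furnish $mn$ independent eigenvectors; when some $A-\lambda BC^T$ is defective with an eigenvalue of algebraic multiplicity $\alpha$, the paper checks that $v\otimes\tilde{w}$ is a generalized eigenvector of $K$ of the same order, so the Jordan structure of $A-\lambda BC^T$ lifts to $K$ and no extra eigenvalues can appear. Your Schur approach is cleaner for the spectral identity \eqref{spec_K} and, as you note, works even when $L$ itself is not diagonalizable---a case the paper's counting argument tacitly excludes, since it always starts from an eigenvector $v$ of $L$. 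Conversely, the paper's route yields slightly more structural information: it shows that (when $L$ is diagonalizable) the \emph{generalized} eigenvectors of $K$ also have tensor-product form, which your block-triangular reduction does not provide directly. Your explicit caveat that the ``all $mn$ eigenvectors'' claim requires diagonalizability of both $L$ and each $A-\lambda BC^T$ is well placed; the paper handles the second failure mode but not the first.
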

\begin{proof}
We have
\begin{equation*}
\begin{split}
K V & = (I_n \otimes A - L \otimes BC^T)(v \otimes w) \\
& = v \otimes A w - \lambda v \otimes BC^T w \\
& = v \otimes (A- \lambda BC^T) w \\
& = \mu \, (v \otimes w) \\
& = \mu \, V \,,
\end{split}
\end{equation*}
so that any vector $V$ of the form \eqref{eigenvector_K} is an eigenvector of $K$ associated with the eigenvalue $\mu$. We have to show that $K$ does not admit other eigenvalues associated with other eigenvectors. If the matrices $A-\lambda BC^T$ have $m$ independent eigenvectors $w$, then a complete set of $n m$ independent eigenvectors is given by \eqref{eigenvector_K}. If a matrix $A-\lambda BC^T$ has less than $m$ independent eigenvectors, then it admits an eigenvalue with (algebraic) multiplicity $\alpha>1$ and
\begin{equation*}
(A-\lambda BC^T-\mu I_m)^\alpha \, \tilde{w} =0 \qquad (A-\lambda BC^T-\mu I_m)^{\alpha-1} \, \tilde{w} \neq 0
\end{equation*}
where $\tilde{w}$ is a generalized eigenvector of $A-\lambda BC^T$. In this case, we have
\begin{equation*}
(I \otimes A - L \otimes BC^T - \mu I_n \otimes I_m )^\alpha \, (v \otimes \tilde{w}) = v \otimes (A-\lambda BC^T-\mu I_m)^\alpha \, \tilde{w} =0
\end{equation*}
and
\begin{equation*}
(I \otimes A - L \otimes BC^T - \mu I_n \otimes I_m )^{\alpha-1} \, (v \otimes \tilde{w}) = v \otimes (A-\lambda BC^T-\mu I_m)^{\alpha-1} \, \tilde{w} \neq 0\,,
\end{equation*}
so that $v \otimes \tilde{w}$ is a generalized eigenvector of $K$ and $\mu$ is also of multiplicity $\alpha$. It follows that there is no other eigenvector than \eqref{eigenvector_K}. This concludes the proof.
\end{proof}

We remark that the result implies that $\sigma(A) \subset \sigma(K)$, since $0 \in \sigma(L)$. Now we can show that the spectral identification problem is consistent: there exists a bijection between the two spectra $\sigma(L)$ and $\sigma(K)$ (for fixed $A$, $B$, and $C$), so that $\sigma(L)$ can be inferred from $\sigma(K)$.
\begin{proposition}
\label{prop:net_ident}
Assume that the local dynamics \eqref{local_dyn} is controllable and observable (i.e. \\
$\textrm{rank}([B \, AB \, \cdots \, A^{m-1}B])=m$ and $\textrm{rank}([C \, A^TC \, \cdots \, (A^T)^{m-1}C])=m$, respectively). Then,
\begin{equation}
\label{sigma_L}
\sigma(L)=\{g(\mu)|\mu \in \sigma(K)\}
\end{equation}
with
\begin{equation}
\label{g}
g(\mu) = \begin{cases}
1/(C^T(A-\mu I_m)^{-1}B) & \textrm{if } \mu \notin \sigma(A) \,, \\
0 & \textrm{if } \mu \in \sigma(A) \,.
\end{cases}
\end{equation}
Moreover,
\begin{equation}
\label{rel_K_L}
\sigma(K_1)=\sigma(K_2) \Leftrightarrow \sigma(L_1)=\sigma(L_2)
\end{equation}
with $K_1=I_n \otimes A - L_1 \otimes BC^T$ and $K_2=I_n \otimes A - L_2 \otimes BC^T$.
\end{proposition}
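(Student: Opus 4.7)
The plan is to exploit Lemma~\ref{lem_main} together with a direct computation of the characteristic polynomial of $A-\lambda BC^T$, and use controllability and observability precisely to invert the $\lambda \mapsto \sigma(A-\lambda BC^T)$ map.

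First, I would set $D(s) = \det(sI_m - A)$ and write the transfer function $G(s) = C^T(sI_m-A)^{-1}B = N(s)/D(s)$, where $N(s)$ is a polynomial of degree at most $m-1$. Under the controllability/observability hypothesis the realization $(A,B,C)$ is minimal, so $N$ and $D$ are coprime. A rank-one perturbation computation (matrix determinant lemma, or Schur complement) gives, for any $\lambda \in \mathbb{C}$,
\begin{equation*}
\det(sI_m - A + \lambda BC^T) \;=\; D(s)\bigl(1+\lambda\, C^T(sI_m-A)^{-1}B\bigr) \;=\; D(s)+\lambda N(s).
\end{equation*}
Hence $\mu \in \sigma(A-\lambda BC^T)$ if and only if $D(\mu)+\lambda N(\mu) = 0$.

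Next I would solve this relation for $\lambda$ in terms of $\mu$ to recover the map $g$. If $\mu \notin \sigma(A)$, then $D(\mu) \neq 0$, so $N(\mu) \neq 0$ as well and $\lambda = -D(\mu)/N(\mu) = 1/(C^T(A-\mu I_m)^{-1}B) = g(\mu)$. If $\mu \in \sigma(A)$, then $D(\mu)=0$, and the relation reduces to $\lambda N(\mu) = 0$; coprimality of $N$ and $D$ forces $N(\mu)\neq 0$, so $\lambda = 0 = g(\mu)$. In both cases $\lambda$ is \emph{uniquely} determined by $\mu$ and equals $g(\mu)$. Combined with Lemma~\ref{lem_main}, which guarantees that every $\mu\in\sigma(K)$ arises from some $\lambda\in\sigma(L)$ in this fashion, and conversely that each $\lambda\in\sigma(L)$ yields at least one $\mu\in\sigma(K)$, this establishes the set equality $\sigma(L)=\{g(\mu):\mu\in\sigma(K)\}$.

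For the equivalence \eqref{rel_K_L}, the direction $\sigma(K_1)=\sigma(K_2)\Rightarrow\sigma(L_1)=\sigma(L_2)$ is immediate from applying $g$ to both sides. The converse follows from Lemma~\ref{lem_main}: $\sigma(K_i)=\bigcup_{\lambda\in\sigma(L_i)}\sigma(A-\lambda BC^T)$ depends only on $\sigma(L_i)$ through this union.

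The main obstacle is the case $\mu\in\sigma(A)$: without minimality, a single $\mu\in\sigma(A)$ could lie in $\sigma(A-\lambda BC^T)$ for a nonzero $\lambda$, destroying uniqueness and making the definition $g(\mu)=0$ arbitrary. This is exactly where the controllability and observability assumptions are used, via coprimality of $N$ and $D$. I would make sure to flag this point carefully, since the rest of the argument is essentially an application of the matrix determinant lemma.
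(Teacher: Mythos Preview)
Your argument is correct, and it takes a genuinely different route from the paper's proof. The paper works directly with eigenvectors: starting from $(A-\mu I_m)w=\lambda BC^T w$, it normalizes $C^Tw=1$ and solves for $\lambda$, and then handles the delicate case $\mu\in\sigma(A)$ by a case analysis showing that $B$ (resp.\ $C$) would have to lie in the span of $m-1$ right (resp.\ left) eigenvectors of $A$, which controllability (resp.\ observability) rules out. You instead compute the full characteristic polynomial $\det(sI_m-A+\lambda BC^T)=D(s)+\lambda N(s)$ via the matrix determinant lemma and invoke the standard fact that minimality of $(A,B,C)$ is equivalent to coprimality of $N$ and $D$; this immediately yields uniqueness of $\lambda$ given $\mu$, including the case $\mu\in\sigma(A)$, without any eigenvector manipulation. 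Your approach is cleaner and makes the role of the controllability/observability hypothesis more transparent (it is exactly coprimality), while the paper's proof is more self-contained in that it does not appeal to the minimality--coprimality equivalence. One small wording issue: when $\mu\notin\sigma(A)$ you write ``so $N(\mu)\neq 0$ as well''; make explicit that this follows from the relation $D(\mu)+\lambda N(\mu)=0$ with $D(\mu)\neq 0$, not from coprimality.
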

\begin{proof}
We first show that $g(\mu) \in \sigma(L)$ for all $\mu \in \sigma(K)$. For $\mu \in \sigma(A)$, it is clear that $g(\mu)=0 \in \sigma(L)$. In the case $\mu \notin \sigma(A)$, it follows from Lemma \ref{lem_main} that $\mu \in \sigma(A- \lambda BC^T)$
for some $\lambda \in \sigma(L)$. This implies that there exists $w\in \mathbb{R}^m$ such that, for some $\lambda \in \sigma(L)$,
\begin{equation}
\label{gen_eigen}
(A-\mu I_m) w = \lambda B C^T w \,.
\end{equation}
It is clear from \eqref{gen_eigen} that $C^T w \neq 0$ (since $\mu \notin \sigma(A)$), so that we can assume $C^T w = 1$ without loss of generality. It follows that $w=\lambda (A-\mu I_m)^{-1} B$ and premultiplying by $C^T$, we obtain
the unique solution of \eqref{gen_eigen}
\begin{equation}
\label{g_expression}
\lambda = \frac{1}{C^T(A-\mu I_m)^{-1}B} = g(\mu)\,.
\end{equation}
Since $\lambda \in \sigma(L)$, we have $g(\mu) \in \sigma(L)$.

Now we show that there does not exist $\lambda \in \sigma(L)$ such that $\lambda \neq g(\mu)$ for all $\mu \in \sigma(K)$. Assume such $\lambda$ exists. If there exists $\mu \in \sigma(A-\lambda BC^T)$ that satisfies $\mu \notin \sigma(A)$, then \eqref{g_expression} is the unique solution of \eqref{gen_eigen}. We have $\lambda=g(\mu)$ and Lemma \ref{lem_main} implies $\mu \in \sigma(K)$. This is a contradiction. If all $\mu \in \sigma(A-\lambda BC^T)$ satisfy $\mu \in \sigma(A)$, then either
\begin{itemize}
\item[(a)] $\lambda=0$;
\item[(b)] $\lambda \neq 0$ if $C^Tw \neq 0$ and $B$ is in the image of $(A-\mu I_m)$, i.e. if $B$ is in the span of $m-1$ right eigenvectors of $A$;
\item[(c)] $\lambda \neq 0$ if $C^Tw = 0$ and $w$ is the right eigenvector of $A$ associated with the eigenvalue $\mu$, i.e. $C$ is in the span of $m-1$ left eigenvectors of $A$.
\end{itemize}
Since $(A,B)$ is a controllable pair, $B$ cannot be in the span of $m-1$ right eigenvectors of $A$. Since $(A,C)$ is an observable pair, $C$ cannot be in the span of $m-1$ left eigenvectors of $A$. It follows that the cases (b) and (c) are impossible, so that we have $\lambda=0=g(\mu)$, which is a contradiction. This concludes the first part of the proof.

Finally, it is clear from \eqref{spec_K} that $\sigma(L_1)=\sigma(L_2) \Rightarrow \sigma(K_1)=\sigma(K_2)$ and from \eqref{sigma_L} that $\sigma(K_1)=\sigma(K_2) \Rightarrow \sigma(L_1)=\sigma(L_2)$.
\end{proof}

\begin{remark}
When the local dynamics of the units is not completely controllable or observable, it is still possible to infer the spectrum of $L$ from the spectrum of $K$. When $B$ is in the span of the right eigenvectors $\{v_1,\dots,v_k\}$ of $A$ (or when $C$ is in the span of the left eigenvectors $\{\tilde{v}_1,\dots,\tilde{v}_k\}$ of $A$), it is easy to show that
\begin{equation*}
\sigma(A-\lambda BC^T) =\sigma(A) \setminus \{\lambda^A_1,\dots,\lambda^A_k\} \cup \{\mu \in \mathbb{C}|g(\mu)=\lambda\} 
\end{equation*}
with $A v_k = \lambda^A_k v_k$, so that
\begin{equation*}
\sigma(K) = \sigma(A) \cup \{\mu  \in \mathbb{C} | g(\mu)=\lambda, \lambda \in \sigma(L)\} \,.
\end{equation*}
For instance, if $B=v_1$ (or $C=\tilde{v}_1$), we have
\begin{equation*}
\sigma(K) = \sigma(A) \cup \{\lambda^A_1 - C^TB \lambda|\lambda \in \sigma(L)\} \,.
\end{equation*}
Proposition \ref{prop:net_ident} does not hold, since different spectra of $L$ can be associated with the same spectrum $\sigma(K)=\sigma(A)$. For instance, the spectra $\sigma(L)=\{0,(\lambda^A_2-\lambda^A_1)/(C^TB)\}$ and $\sigma(L)=\{0,(\lambda^A_3-\lambda^A_1)/(C^TB)\}$ are associated with the same spectrum $\sigma(K)=\sigma(A)$. Note however that \eqref{rel_K_L} still holds if one takes into account the multiplicity of the eigenvalues (provided that $C^TB \neq 0$). Moreover \eqref{g} can still be used to obtain the spectrum of $L$ from the spectrum of $K$. \hfill $\diamond$
\end{remark}

The spectral identification method is illustrated in the following simple example.

\begin{example}[Linear system] \label{example1} Consider a random network with $10$ vertices (see the adjacency matrix in Appendix \ref{app_examples}) with the local linear dynamics
\begin{equation}
\label{lin_dyn_example1}
\begin{array}{rcl}
\dot{x}_k & = & \mat{cc}
{  -1  &  -2 \\
     1  &  -1}
 x_k + \mat{c}{1 \\ 2} u_k \,, \\
y_k & = & (1 \quad 1) \  x_k \,,
\end{array}
\end{equation}
and assume that the observation function is $f(X)=x_{1,1}$ with $x_k=(x_{k,1},x_{k,2})$ (i.e. only one state of one unit is measured). Using the time series related to $10$ different initial conditions, the DMD algorithm provides an accurate estimate of the $mn=20$ eigenvalues of the matrix $K$ (Figure \ref{fig:example1}(a)). Then the Laplacian spectrum of the network is also recovered by using \eqref{g} (Figure \ref{fig:example1}(b)). \hfill $\diamond$

\begin{figure}[htbp]
\centering
\subfigure[Spectrum of $K$]{\includegraphics[width=0.45\columnwidth]{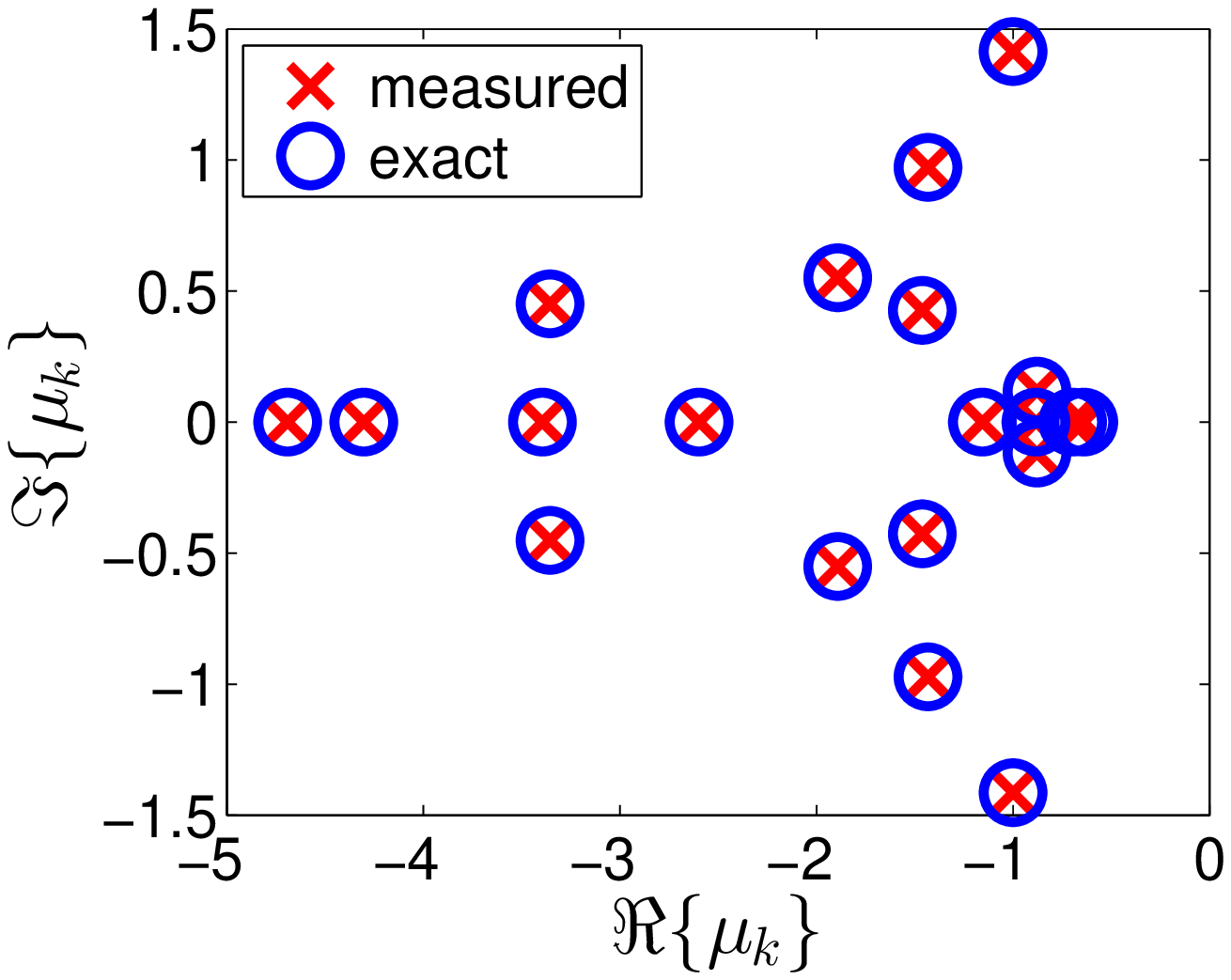}}
\subfigure[Spectrum of $L$]{\includegraphics[width=0.45\columnwidth]{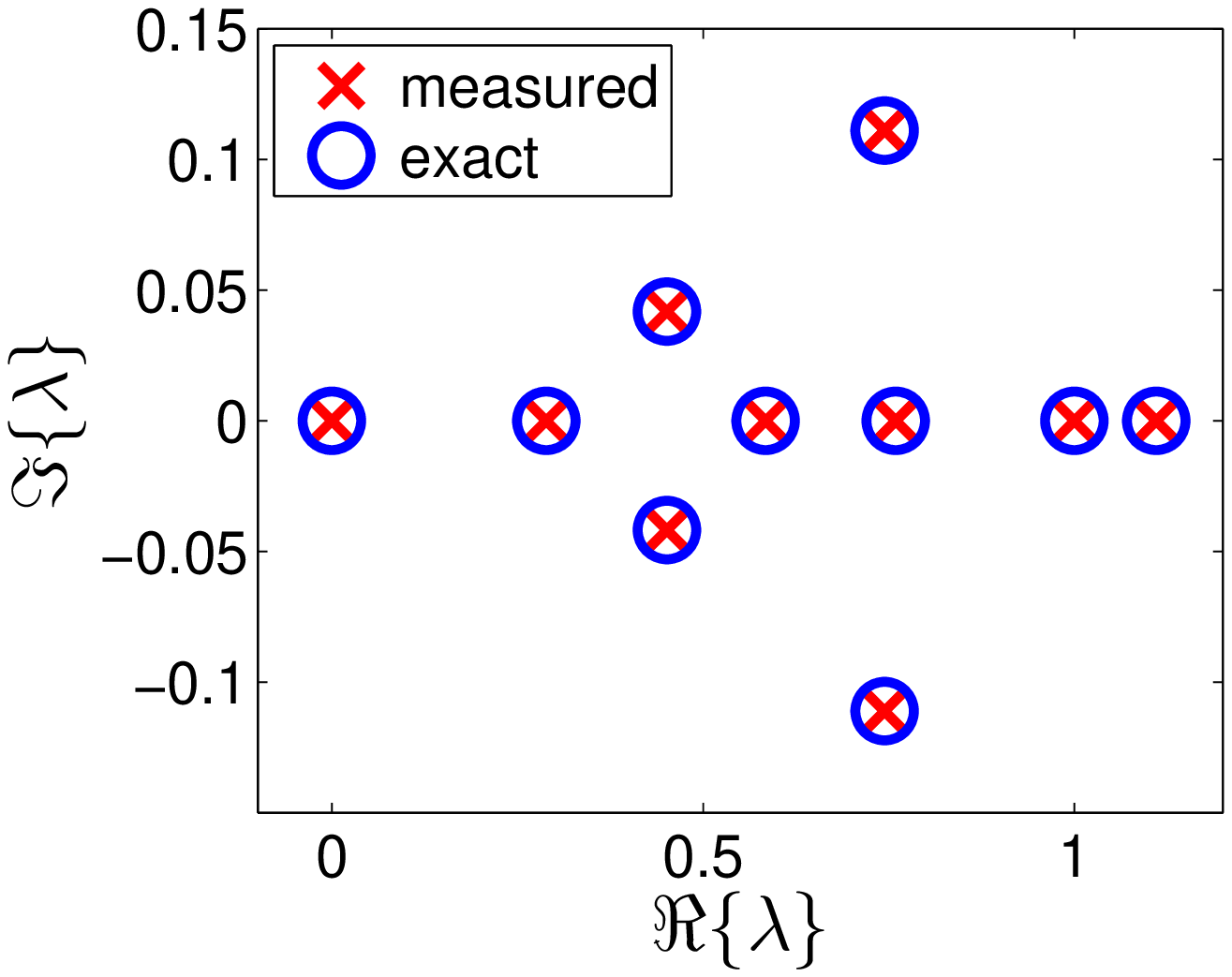}}
\caption{Spectral network identification for a linear system, using the measurement of one state at one vertex (Example \ref{example1}). Simulation parameters are given in Appendix \ref{app_examples}. (a) The DMD algorithm yields the 20 eigenvalues $\mu_k$ of the matrix $K$. (b) All the Laplacian eigenvalues $\lambda$ of the network are recovered.}
\label{fig:example1}
\end{figure}

\end{example}

\paragraph{Numerical error} Due to numerical imprecision, the eigenvalues of $K$ might be computed with some error (see e.g. Example \ref{example2}). Moreover these eigenvalues cannot be obtained precisely in the case of heterogeneous populations of non-identical units, as we will see. In these situations, one can estimate the induced error on the eigenvalues of $L$ obtained with \eqref{g_expression}. Denoting by $\tilde{\mu}=\mu+\delta \mu$ and $\tilde{\lambda}=\lambda+\delta \lambda$ a perturbed eigenvalue of $K$ and $L$, respectively, we have the first order Taylor approximation
\begin{equation*}
(A- (\mu + \delta \mu) I_m)^{-1} = (A-\mu I_M)^{-1} + (A-\mu I)^{-2} \delta \mu + \mathcal{O}(\delta \mu^2)
\end{equation*}
and \eqref{g_expression} yields
\begin{equation}
\label{sensitivity}
\delta \lambda \approx -\frac{C^T(A-\mu I_m)^{-2} B}{(C^T(A-\mu I_m)^{-1}B)^2} \delta \mu \triangleq \Delta(\mu) \delta \mu \,.
\end{equation}
It follows that measured eigenvalues $\tilde{\mu}$ of $K$ satisfying $|C^T(A-\mu I_m)^{-1} B| \ll 1$ will induce a large error on the associated eigenvalue $\tilde{\lambda}$ of $L$. Since $m$ distinct measured eigenvalues $\tilde{\mu}_k$, $k=1,\dots,m$, yield eigenvalues $\tilde{\lambda}_k$ approximating the same value $\lambda$, it can be advantageous to use a weighted average of these values $\tilde{\lambda}_k$. Assuming that the probability distribution of $\tilde{\mu}_k$ has a constant variance $\sigma_\mu^2$ for all $k$ \footnote{This is only an approximation, since non-dominant eigenvalues (i.e. satisfying $\Re\{\mu_k\} \ll 0$) might be computed by the DMD algorithm with larger errors.},  we can consider the weighted average
\begin{equation}
\label{weighted_av}
\overline{\lambda} = \frac{\sum_{k=1}^m \frac{\tilde{\lambda}_k}{(\Delta(\tilde{\mu}_k))^2}}{\sum_{k=1}^m \frac{1}{(\Delta(\tilde{\mu}_k))^2}}
\end{equation}
which is associated with a probability distribution characterized by the variance
\begin{equation*}
\var{} (\overline{\lambda}) = \frac{\sigma^2}{\sum_{k=1}^m \frac{1}{(\Delta(\tilde{\mu}_k))^2}}\,.
\end{equation*}

\subsection{Nonlinear systems with identical units}
\label{sec:nonlinear}

Now we show that the spectral network identification framework developed in the case of linear systems can easily be extended to nonlinear systems. Assume that the units have a nonlinear dynamics
\begin{equation}
\label{local_dyn2}
\begin{array}{rclc}
\dot{x}_k & = & F(x_k) + G(x_k) u_k & \qquad x_k \in \mathbb{R}^m \\
y_k & = & H(x_k) & \qquad y_k \in \mathbb{R}
\end{array}
\qquad k \in \{1,\dots,n\}
\end{equation}
with the (analytic) functions $F:\mathbb{R}^m \to \mathbb{R}^m$, $G:\mathbb{R}^m \to \mathbb{R}^m$, and $H:\mathbb{R}^m \to \mathbb{R}$. The units interact through the diffusive coupling
\begin{equation}
\label{diff_coupling2}
u_k=\sum_{j=1}^n W_{kj} (y_k-y_j) \,.
\end{equation}
We make the standing assumption that the local dynamics \eqref{local_dyn2} admit a stable fixed point $x^*$ and that the units synchronize, so that the solutions $X(t)$ of \eqref{local_dyn2}-\eqref{diff_coupling2} converge to the (stable) fixed point $X^*=[x^* \dots x^*]^T$. The Jacobian matrix associated with \eqref{local_dyn2}-\eqref{diff_coupling2} linearized at $X^*$ is given by
\begin{equation}
\label{Jacob_K2}
K \triangleq I_n \otimes A - L \otimes B C^T \in \mathbb{R}^{mn \times mn}
\end{equation}
with $A=\partial F/\partial x(x^*)$, $B=G(x^*)$, and $C=\nabla H(x^*)$ ($\nabla$ denotes the gradient).
Since \eqref{Jacob_K2} is similar to \eqref{K}, the following result can be obtained directly from Proposition \ref{prop:net_ident}.
\begin{proposition}
\label{prop:net_ident2}
Consider the local dynamics \eqref{local_dyn2} and assume that $(A,B)=(\partial F/\partial x(x^*),G(x^*))$ and $(A,C)=(\partial F/\partial x(x^*),\nabla H(x^*))$ are controllable and observable pairs, respectively. Then the relationship between the spectrum of the Laplacian matrix $L$ and the spectrum of the Jacobian matrix \eqref{Jacob_K2} is a bijection (i.e. \eqref{rel_K_L} holds). Moreover, the spectrum of $L$ is given by \eqref{sigma_L}.
\end{proposition}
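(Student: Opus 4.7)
The plan is to reduce Proposition \ref{prop:net_ident2} to the already-established linear Proposition \ref{prop:net_ident} by linearizing the networked dynamics \eqref{local_dyn2}-\eqref{diff_coupling2} around the synchronized equilibrium $X^*=[x^*\dots x^*]^T$. Since both claims of Proposition \ref{prop:net_ident2} concern only the spectrum of $K$, and since the Jacobian \eqref{Jacob_K2} has the exact same algebraic form as the matrix \eqref{K} studied in the linear case, once this Jacobian is identified with the linearization of the full vector field at $X^*$ the conclusion follows by a direct application of Proposition \ref{prop:net_ident}.

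First I would verify that $X^*$ is indeed an equilibrium: at $X^*$ all outputs $y_k=H(x^*)$ coincide, so $u_k=0$ for every $k$ by \eqref{diff_coupling2}, and therefore $\dot{x}_k=F(x^*)=0$ by the hypothesis that $x^*$ is a fixed point of the isolated dynamics. Writing $X=X^*+\delta X$ and expanding each block to first order,
\begin{equation*}
\dot{\delta x}_k = \frac{\partial F}{\partial x}(x^*)\,\delta x_k + G(x^*)\,\delta u_k + \left[\frac{\partial G}{\partial x}(x^*)\,\delta x_k\right] u_k\big|_{X^*} + O(\|\delta X\|^2),
\end{equation*}
where $\delta u_k=\sum_{j}W_{kj}(C^T\delta x_k-C^T\delta x_j)$ is the first-order variation of $u_k$. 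The third term vanishes because $u_k|_{X^*}=0$, so the derivative of $G$ contributes nothing at linear order. Stacking the blocks and using that the map $\delta X\mapsto\delta U$ factors through the Kronecker product $(L\otimes C^T)$, the Jacobian at $X^*$ acquires exactly the structure of \eqref{K} with $A=\partial F/\partial x(x^*)$, $B=G(x^*)$, and $C=\nabla H(x^*)$, which is precisely \eqref{Jacob_K2}.

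With this identification in place, the controllability of $(A,B)$ and observability of $(A,C)$ assumed in Proposition \ref{prop:net_ident2} are exactly the hypotheses of Proposition \ref{prop:net_ident}, which can then be invoked verbatim to conclude both the bijection \eqref{rel_K_L} and the explicit formula \eqref{sigma_L}. The step that requires genuine care is the observation that $u_k$ is itself of first order in $\delta X$ near $X^*$: this is what allows $G(x_k)$ to be replaced by the constant $B=G(x^*)$ at linear order and keeps $K$ in the clean tensor-product form. Without this fact, extra blocks involving $\partial G/\partial x(x^*)$ would enter the Jacobian and the clean reduction to the linear case would break down.
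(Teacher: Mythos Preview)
Your proposal is correct and follows exactly the same strategy as the paper: reduce to Proposition~\ref{prop:net_ident} by identifying the Jacobian \eqref{Jacob_K2} with the linear matrix \eqref{K} under $A=\partial F/\partial x(x^*)$, $B=G(x^*)$, $C=\nabla H(x^*)$. The paper's own proof is a one-line invocation of Proposition~\ref{prop:net_ident}, since the form \eqref{Jacob_K2} is asserted in the text preceding the proposition; your additional justification that $u_k|_{X^*}=0$ kills the $\partial G/\partial x$ contribution is a welcome clarification of why the Jacobian indeed has that clean tensor-product structure, but it is not a different approach.
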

\begin{proof}
The proof follows from Proposition \ref{prop:net_ident}, with $A=J(x^*)$, $B=G(x^*)$, and $C=\nabla H(x^*)$.
\end{proof}

Proposition \ref{prop:net_ident2} implies that the Laplacian eigenvalues of the network can be obtained from the eigenvalues $\mu_j$ of the Jacobian matrix $K$. Moreover, the eigenvalues of $K$ can be obtained from sparse measurement of the network dynamics. In the case of nonlinear systems, they are related to spectral properties of the dynamics defined in the framework of the so-called Koopman operator.

\paragraph{Koopman operator} 
Let $X(t)$ be a trajectory solution of \eqref{local_dyn2}-\eqref{diff_coupling2} associated with the initial condition $X_0$. We suppose that $p \ll n$ measurements of $X(t)$ are obtained through a possibly nonlinear observation function $f:\mathbb{R}^{mn} \to \mathbb{R}^p$, which depends on a few local states in our case.
The Koopman operator(s) $U^t$ are a semi-group acting on the set of such functions $\mathbb{R}^{mn} \to \mathbb{R}^p$. They are defined by 
$$(U^t f)(X_0) = f(X(t))$$
for all $f$ and $X_0$. The value $(U^t f)(X_0)$ is thus the value that will be observed at time $t$ via $f$ for a trajectory which is at $X_0$ at time $t=0$. One can verify that the $U^t$ are always linear and can therefore be characterized by spectral properties.
Provided that $f$ is analytic, the spectral decomposition of the operator yields
\begin{equation}
\label{state_evol_nonlin}
f(X(t))=X^* + \sum_{(j_1,\dots,j_{mn}) \in \mathbb{N}^{mn}} V^f_{j_1,\dots,j_{mn}} \, e^{(j_1 \mu_1+\dots+j_{mn} \mu_{mn})t}\,,
\end{equation}
where $j_1 \mu_1+\dots+j_{mn} \mu_{mn}$ are the eigenvalues of the Koopman operator and $V^f_{j_1,\dots,j_{mn}} \in \mathbb{R}^p$ are the so-called Koopman modes, which depend on $X_0$ \cite{MMM_isostables,Mezic_ann_rev}. In addition, it can be shown that the DMD algorithm extracts the spectral properties of the Koopman operator from snapshots of \eqref{state_evol_nonlin} \cite{Rowley,Tu}. Provided that the dominant Koopman modes are nonzero (see also Remark \ref{rem:nonzero_modes}), the algorithm yields the dominant Koopman eigenvalues, which are the eigenvalues of the Jacobian matrix \eqref{Jacob_K2}; see e.g. \cite{Mezic_ann_rev}. According to Proposition \ref{prop:net_ident2}, these eigenvalues can be used to retrieve the Laplacian eigenvalues.

\begin{remark}
\label{rem:nonzero_modes}
The DMD algorithm can capture dominant eigenvalues $\mu_k$ only if the associated dominant Koopman modes $V^f_{j_1,\dots,j_{mn}}$ (with $j_1+\cdots+j_{mn}=1$) are nonzero in \eqref{state_evol_nonlin}. These dominant modes are given by $V^f_{j_1,\dots,j_{mn}}= \nabla f(X^*)^T V_j$, where $V_j$ are the right eigenvectors of $K$ and with the notation $\nabla f=[\nabla f_1 , \cdots , \nabla f_p]^T$; see e.g. \cite{MMM_isostables}. It follows that the dominant Koopman modes are nonzero if the pair $(K,\nabla f(X^*))$ is observable. In particular, we must have $\nabla f(X^*) \neq 0$. In the following, we will assume that these conditions are satisfied.
\end{remark}

\begin{example}[Nonlinear system] \label{example2} We consider the same network as in Example \ref{example1} (see the adjacency matrix in Appendix \ref{app_examples}) with the local nonlinear dynamics
\begin{equation}
\label{nonlin_dyn_example2}
\begin{array}{ccl}
\mat{c}{\dot{x}_{k,1} \\ \dot{x}_{k,2}} & = & \mat{c}{-x_{k,1}-x^3_{k,1}-2 x_{k,2} \\ x_{k,1}-x_{k,2}-x^3_{k,2}}
+\mat{c}{\cos(x_{k,2}) \\ 1.5 \cos(x_{k,2})} u_k \,, \\
y_k & = & x_{k,1}+x_{k,1}^2+x_{k,2} \,,
\end{array}
\end{equation}
where $x_k=[x_{k,1} \, x_{k,2}]^T$ is the state vector assigned to vertex $k$. We choose the nonlinear observation function $f(X)=[\sin(x_{1,1}+x_{1,2}) \, \sin(x_{1,1}-x_{1,2})]$. Figure \ref{fig:example2}(a) shows that the DMD algorithm applied to times series related to $10$ different initial conditions retrieves the dominant eigenvalues $\mu_k$ of $K$, but cannot compute the eigenvalues with a fast decay rate. However, the eigenvalues are redundant since two distinct eigenvalues are related to the same Laplacian eigenvalue, so that only dominant eigenvalues $\mu_k$ are sufficient to obtain the full Laplacian spectrum. Figure \ref{fig:example2}(b) shows that all the Laplacian eigenvalues are indeed obtained with good accuracy, although two additional values are predicted incorrectly. Note also that when two values are obtained for the same eigenvalue (as it can be observed in Figure \ref{fig:example2}(b)), one could use the weighted average \eqref{weighted_av} for a better approximation.  \hfill $\diamond$
\begin{figure}[htbp]
\centering
\subfigure[Spectrum of $K$]{\includegraphics[width=0.45\columnwidth]{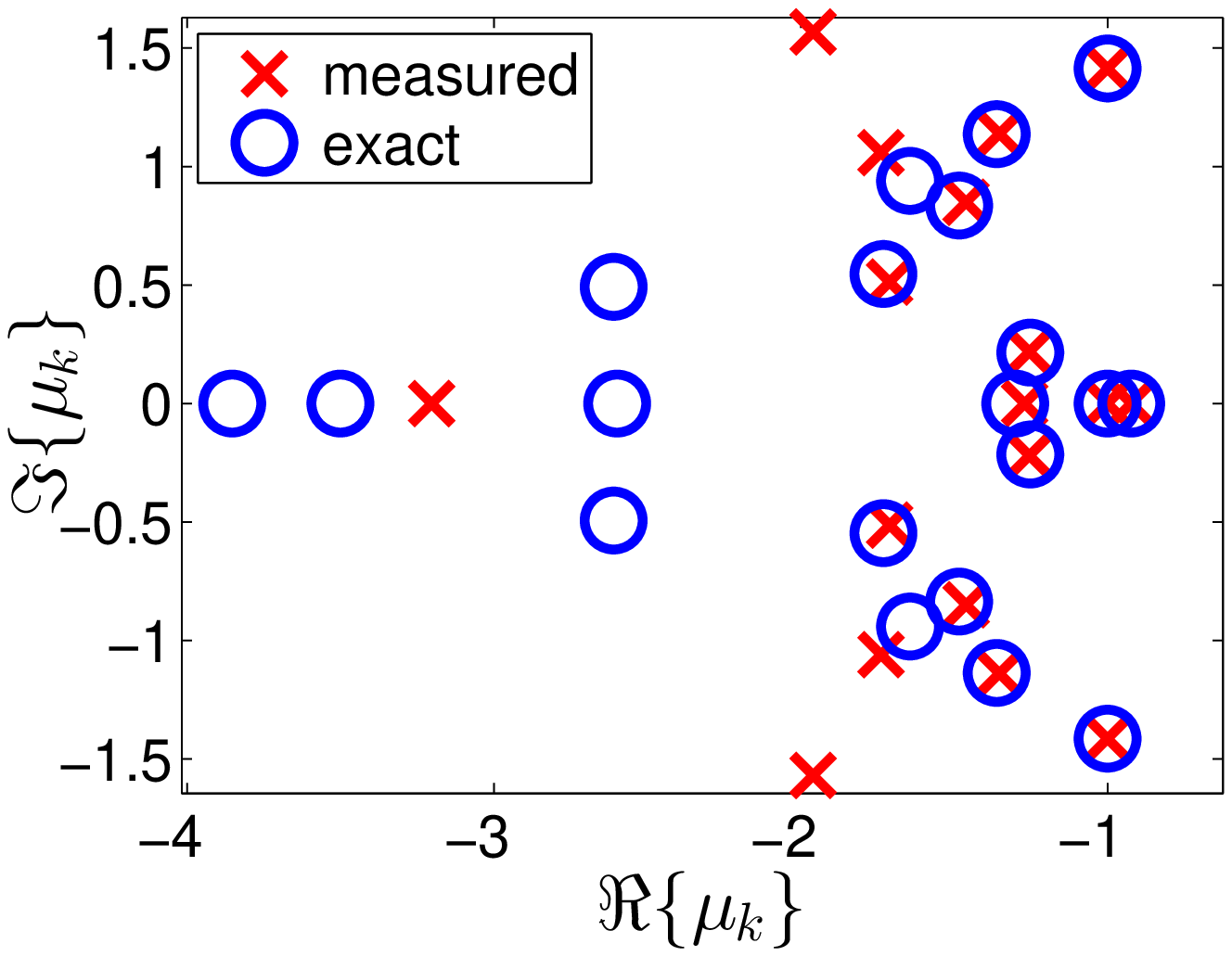}}
\subfigure[Spectrum of $L$]{\includegraphics[width=0.45\columnwidth]{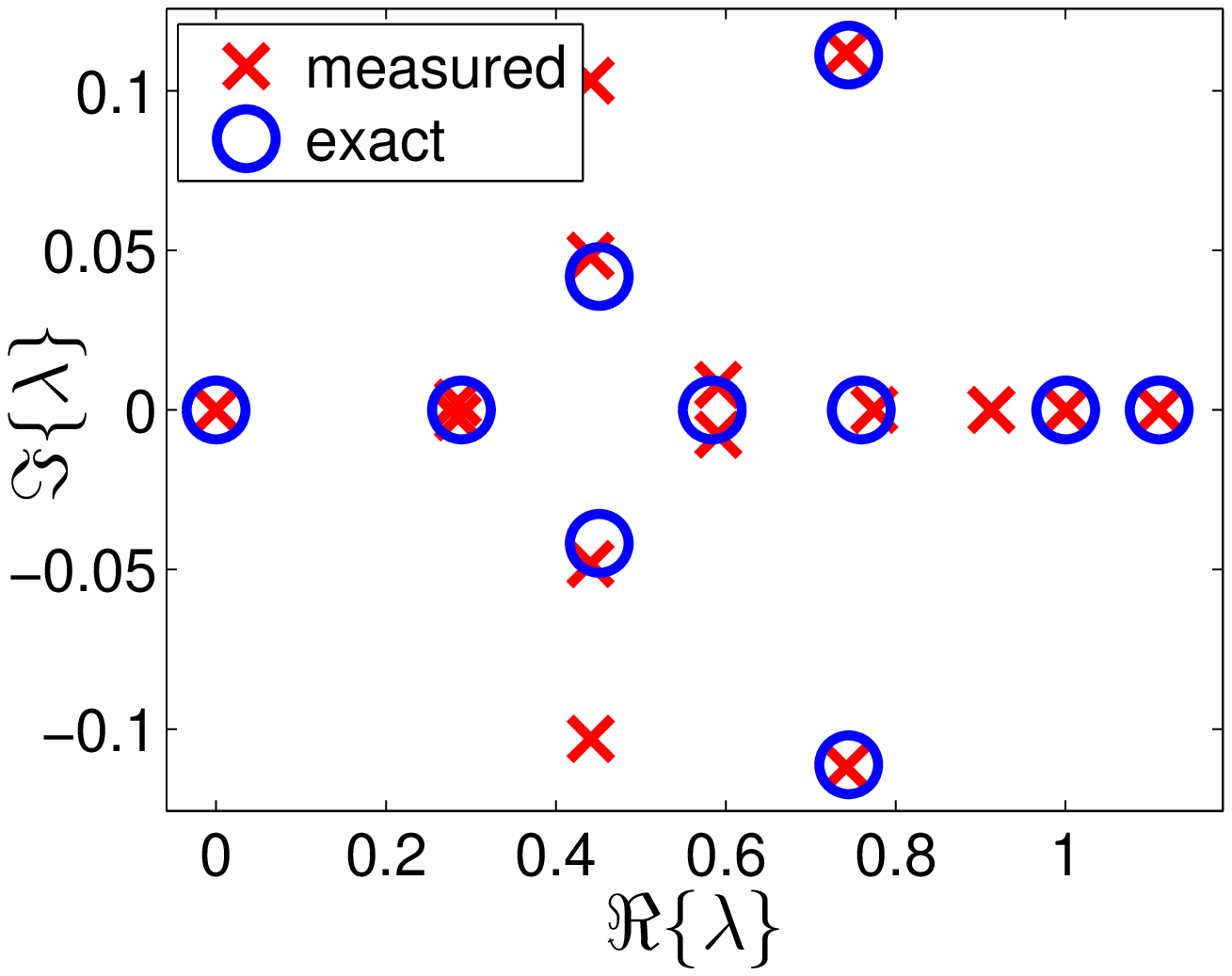}}
\caption{Spectral network identification for a nonlinear system, using the measure of one state at one vertex (Example \ref{example2}). Simulation parameters are given in Appendix \ref{app_examples}. (a) The DMD algorithm yields the dominant eigenvalues $\mu_k$ of the matrix $K$. (b) The Laplacian eigenvalues $\lambda$ of the network are recovered.}
\label{fig:example2}
\end{figure}
\end{example}

\subsection{Non-identical units: impossibility results}
\label{sec:impossibility}

So far we have considered the case of identical units sharing the same local dynamics. We now focus on the case of non-identical units and show that the spectral identification problem cannot be solved in this case. This is illustrated by the following example. Consider the one-dimensional linear local dynamics $\dot{x}_k= A_k x_k + u_k$, $y_k=x_k$ (with $x_k,y_k \in \mathbb{R}$), where $A_k \in \mathbb{R}$ accounts for the heterogeneity of the units dynamics.
The global dynamics of the network is given by
$\dot{X}= (\bar{A}-L) X$, with $\bar{A}=\mathrm{diag}(A_1 \, \cdots \, A_n)$. Let
\begin{equation*}
L_1=\mat{cccc}
     { 1  &  0 & 0 & -1 \\
			 -1 & 3 & -1 & -1 \\
			 -1  & 0 & 2 & -1\\
			 0  & -1 & -1 & 2}
\quad
L_2=\mat{cccc}
{1 & 0 & 0 & -1 \\
 0  & 2 & -1 & -1 \\
 -1 & 0 & 2 & -1 \\
 -1 & -1 & -1 & 3}
\end{equation*}
\begin{equation*}
\bar{A}=-\mat{cccc}
{0.5 & 0 & 0 & 0 \\
0 & 1.5 & 0 & 0 \\
0 & 0 & 1.5 & 0 \\
0 & 0 & 0 & 1.5} \,.
\end{equation*}
Then we have
\begin{equation*}
\sigma(\bar{A}-L_1)=\sigma(\bar{A}-L_2)=\{-0.5877,-2.5,-3.2135,-0.5878 \}\,,
\end{equation*}
but
\begin{equation*}
\sigma(L_1)=\{0,2,3,3\} \neq \sigma(L_2)=\{0,2,2,4\}\,,
\end{equation*}
so that
\begin{equation*}
\sigma(\bar{A}-L_1)= \sigma(\bar{A}-L_2) \nRightarrow \sigma(L_1)= \sigma(L_2)
\end{equation*}
and \eqref{rel_K_L} in Proposition \ref{prop:net_ident} does not hold. It follows that the relation between the set of spectra $\sigma(L)$ and the set of spectra $\sigma(\bar{A}-L)$ is not injective in the case of non-identical units, so that the Laplacian eigenvalues cannot be inferred from the (measured) eigenvalues of $\bar{A}-L$. This example shows that the spectral network identification problem cannot be solved even when the graph is unweighted and when only one unit differs from the others.\\


Now we consider the general nonlinear dynamics of non-identical units
\begin{equation}
\label{local_dyn_nonident}
\begin{array}{rclc}
\dot{x}_k & = & F(x_k) + \delta F_k(x_k) + G(x_k) u_k\\
y_k & = & H(x_k)
\end{array}
\qquad k \in \{1,\dots,n\}\,,
\end{equation}
where $\delta F_k$ accounts for the heterogeneity of the units dynamics. We assume that the units are almost identical, so that $\|\delta F_k\| \ll 1$. Note that the units do not synchronize perfectly, since the system admits a global fixed point $X^*+\delta X^*=[x^*+\delta x_1^* \, \cdots \, x^*+\delta x_n^*]$ (with $\|\delta x_k^*\| \ll 1$). We consider that the functions $G$ and $H$ are identical for all the units. There is almost no loss of generality, since these functions describe the connections between units, and these connections are already heterogeneous in the case of weighted graphs. In addition, this simplification does not significantly affect the theoretical results developed in the remaining of the paper.

The Jacobian matrix related to the system \eqref{local_dyn_nonident} (linearized around $X^*+\delta X^*$) is
\begin{equation}
\label{delta_K}
K+\mathrm{diag}(\delta A_1 \cdots \delta A_n) \triangleq K + \delta K
\end{equation}
where $K$ is given by \eqref{Jacob_K2} and where $\delta A_k$ is such that
\begin{equation*}
\frac{\partial F}{\partial x}(x^*+\delta x_k^*)+\frac{\partial \delta F_k}{\partial x}(x^*+\delta x_k^*) = A + \delta A_k\,,
\end{equation*}
with $A=\partial F/\partial x(x^*)$. Since the DMD algorithm provides the eigenvalues of $K+\delta K$, the spectral identification problem is to compute $\sigma(L)$ from $\sigma(K+\delta K)$. Equivalently, since the relationship between $\sigma(K)$ and $\sigma(L)$ has been established in Section \ref{sec:lin_ident} and \ref{sec:nonlinear}, one has to estimate the eigenvalues of $K$ from the perturbed eigenvalues of $K+\delta K$ (note that $\|\delta K\| \ll 1$ since $|\delta F_k|\ll 1$). In the case of unweighted graphs, it can be shown that $\sigma(K_1+\delta K)=\sigma(K_2+\delta K) \Rightarrow \sigma(K_1)=\sigma(K_2)$ if the perturbation $\delta K$ is small enough, so that the spectral identification problem can be solved exactly. However this situation is very restrictive. For more general graphs, perturbation theory for matrix eigenvalues \cite{Perturbation_eigenvalues_book} can provide upper bounds on the difference between the eigenvalues of $K$ and $K+\delta K$, but these bounds are too conservative, especially when the network is large.

\alex{It is also noticeable that the linearized dynamics of identical agents that do not synchronize but converge to different equilibria are in general equivalent to the (linear) dynamics of non-identical agents. In such a case, we would thus encounter similar issues when inferring the spectral properties of the network .}

In the next section, we show that a statistical approach can circumvent these limitations in the case of large networks.


\section{Statistical approach to large networks}
\label{sec:stat}

In the case of large graphs, most individual Laplacian eigenvalues have an insignificant influence on the network dynamics, making them very hard to identify precisely. On the other hand, each of them taken individually only captures a very small amount of information about the network structure. Therefore, recovering each individual eigenvalue is on the one hand impractical but on the other hand not really necessary.
Instead, it is much more relevant and convenient to focus on statistical measures of the spectral density of the Laplacian matrix. In this section, we show that one can estimate the first spectral moments \eqref{spec_moments} of the Laplacian matrix from sparse measurements in the network. These spectral moments are related to statistical information on the degree distribution of the vertices (see Section \ref{sec:exact}). This approach is also well-suited to the case of non-identical units and can be used to obtain some information on the unweighted underlying graph.

\subsection{Spectral moments of the Laplacian matrix}
\label{sec:stat_identical}

From a few eigenvalues of $\sigma(K)$ obtained with the DMD algorithm, one can compute the spectral moments of $K$; see Section \ref{sec:approx_spec_mom} for details on the numerical method. Then the spectral moments of $L$ can be obtained from the spectral moments of $K$, as shown in the following proposition.
\begin{proposition}
\label{prop:moments}
Suppose that $K=I_n \otimes A - L \otimes BC^T$. Then the spectral moments of $L$ are given by
\begin{equation*}
\mathcal{M}_k(L)=\frac{(-1)^k}{(C^TB)^k} \left(m \mathcal{M}_k(K)+\sum_{j=0}^{k-1} {k \choose j} (-1)^{j+1} \mathcal{M}_j(L) \, \tr(A^{k-j}(BC^T)^j)\right)\,,
\end{equation*}
with ${k \choose j} = \frac{k!}{j!(k-j)!}$.
\end{proposition}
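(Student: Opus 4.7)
The plan is to compute $\tr(K^k)$ using Lemma~\ref{lem_main} and then isolate $\tr(L^k)$. Since the spectrum of $K$ decomposes (with multiplicities) as $\sigma(K) = \bigcup_{\lambda \in \sigma(L)} \sigma(A - \lambda BC^T)$, one has
\begin{equation*}
\tr(K^k) = \sum_{\lambda \in \sigma(L)} \tr((A - \lambda BC^T)^k).
\end{equation*}
This reduces the task to computing the scalar polynomial $\lambda \mapsto \tr((A - \lambda BC^T)^k)$ in $\lambda$ and then summing its values over $\sigma(L)$, which naturally produces linear combinations of the moments $\tr(L^j) = n\,\mathcal{M}_j(L)$ for $j \le k$.

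The central step is the polynomial identity
\begin{equation*}
\tr((A - \lambda BC^T)^k) = \sum_{j=0}^{k} \binom{k}{j}(-\lambda)^j\,\tr(A^{k-j}(BC^T)^j),
\end{equation*}
which has exactly the shape of a naive binomial expansion even though $A$ and $BC^T$ do not commute. To justify it, I would expand $(A - \lambda BC^T)^k$ into an alternating sum over ordered words of length $k$ in the letters $A$ and $BC^T$, group the terms by the number $j$ of occurrences of $BC^T$, and argue that each of the $\binom{k}{j}$ such words contributes the same value $\tr(A^{k-j}(BC^T)^j)$ under the trace. The two available tools are (i) cyclicity of the trace, which identifies any word with its cyclic rotations, and (ii) the rank-one collapse $BC^T X BC^T = (C^T X B)\,BC^T$, which merges any two consecutive occurrences of $BC^T$ into a scalar multiple of a single $BC^T$. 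Together, repeated applications of these two operations should fold every such word into the canonical form $A^{k-j}(BC^T)^j$ inside the trace.

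Granting this identity, summing over $\lambda \in \sigma(L)$ and using $\sum_\lambda \lambda^j = \tr(L^j)$ (with $\tr(L^0) = n$) gives
\begin{equation*}
\tr(K^k) = \sum_{j=0}^{k}\binom{k}{j}(-1)^j\,\tr(L^j)\,\tr(A^{k-j}(BC^T)^j).
\end{equation*}
Since $(BC^T)^k = (C^TB)^{k-1}\,BC^T$, the $j = k$ term equals $(-1)^k(C^TB)^k \tr(L^k)$. Isolating it, dividing by $n$, and using $\tr(L^j) = n\,\mathcal{M}_j(L)$ and $\tr(K^k) = mn\,\mathcal{M}_k(K)$ produces exactly the stated recursion for $\mathcal{M}_k(L)$. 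The hard part is the polynomial identity in the middle step: for $j \in \{0, 1, k-1, k\}$ cyclicity alone reduces every word to the canonical form, but for intermediate values of $j$ the $\binom{k}{j}$ words split into several cyclic classes with a priori different representatives, and the careful combinatorial accounting of how the rank-one collapse redistributes their contributions to match the symmetric form $\binom{k}{j}\tr(A^{k-j}(BC^T)^j)$ is where I expect most of the work to sit.
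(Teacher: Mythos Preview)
Your route through Lemma~\ref{lem_main} and the paper's direct Kronecker expansion are the same computation: both reduce to the claim
\[
\tr\bigl((A-\lambda BC^T)^k\bigr)=\sum_{j=0}^k\binom{k}{j}(-\lambda)^j\,\tr\bigl(A^{k-j}(BC^T)^j\bigr).
\]
The paper simply writes the binomial expansion of $\tr(I_n\otimes A-L\otimes BC^T)^k$ without comment (even though $I_n\otimes A$ and $L\otimes BC^T$ do not commute), whereas you correctly flag this identity as the crux.

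The gap is real: the identity is \emph{false} for $k\ge 4$, so neither argument can be completed as stated. At $k=4$, $j=2$ the six words split into two cyclic classes, represented by $A^2(BC^T)^2$ and $(ABC^T)^2$, of sizes $4$ and $2$. Your rank-one collapse evaluates their traces as $(C^TB)(C^TA^2B)$ and $(C^TAB)^2$ respectively, and these differ in general: with $m=2$, $A=\bigl(\begin{smallmatrix}0&1\\1&0\end{smallmatrix}\bigr)$ and $B=C=e_1$ they equal $1$ and $0$. So the true $\lambda^2$-coefficient is $4(C^TB)(C^TA^2B)+2(C^TAB)^2$, not $6\,\tr(A^2(BC^T)^2)$; one can check directly that the proposition's formula then returns $\mathcal{M}_4(L)=4$ for the two-node graph with $L=\bigl(\begin{smallmatrix}1&-1\\-1&1\end{smallmatrix}\bigr)$, whereas the correct value is $8$. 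The rank-one structure does reduce every word to a product of the scalars $C^TA^iB$, but distinct cyclic classes produce genuinely different products, and no combinatorial redistribution will make them agree. The statement (and the paper's proof) is therefore valid only for $k\le 3$, where each $j$ yields a single cyclic class; fortunately this covers the applications, which use only $\mathcal{M}_1(L)$ and $\mathcal{M}_2(L)$.
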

\begin{proof}
We have
\begin{equation*}
\begin{split}
\mathcal{M}_k(K) & = \frac{1}{mn} \tr (I_n \otimes A - L \otimes BC^T)^k \\
& = \frac{1}{mn} \sum_{j=0}^k {k \choose j} \tr{} \left((I_n \otimes A )^{k-j} (-L \otimes BC^T)^j \right) \\
& = \frac{1}{mn} \sum_{j=0}^k {k \choose j} \tr{} \left((-L)^j \otimes A^{k-j} (BC^T)^j \right) \\
& = \frac{1}{m} \sum_{j=0}^k {k \choose j} (-1)^j \mathcal{M}_j(L) \, \tr{} \left(A^{k-j} (BC^T)^j\right) \\
& = \frac{(-1)^k}{m} \mathcal{M}_k(L) (C^TB)^k + \frac{1}{m} \sum_{j=0}^{k-1} {k \choose j} (-1)^j \mathcal{M}_j(L) \tr{} \left(A^{k-j} (BC^T)^j\right)
\end{split}
\end{equation*}
and the result follows.
\end{proof}

Note that this result holds for both cases of linear and nonlinear units. In the nonlinear case, we have $A=\partial F/\partial x(x^*)$, $B=G(x^*)$, and $C=\nabla H(x^*)$.

We will focus on the first two moments and it follows from Proposition \ref{prop:moments} that
\begin{equation}
\label{M1_L}
\mathcal{M}_1(L)=\frac{-m \mathcal{M}_1(K)+\tr (A)}{C^TB}
\end{equation}
and
\begin{equation}
\label{M2_L}
\mathcal{M}_2(L) = \frac{m \mathcal{M}_2(K)- \tr (A^2)+2 \mathcal{M}_1(L) \, \tr (ABC^T)}{(C^TB)^2} \,.
\end{equation}

\begin{example}[Large network]
\label{example3}
We consider a random Erd{\H o}s-R\'enyi graph with $n=100$ vertices and a probability $p_{edge}=0.3$ (supposedly not known) for any two vertices to be connected. The weights of the edges are randomly distributed according to a uniform distribution on $[0,0.1]$. The local dynamics of the units is given by \eqref{lin_dyn_example1} and the observation function is $f(X)=x_{1,1}$, corresponding to the measurement of one state of one unit. \alex{Note that the dynamics are linear, a choice made to provide the best illustration of the theoretical results. Examples of identification of large networks with nonlinear dynamics are shown in Section \ref{sec:examples}.}

We use a heuristic method to estimate the spectral moments of $K$ (Figure \ref{fig:example3}), approximating the clusters of eigenvalues by the convex hull of values obtained with the DMD algorithm and assuming a uniform distribution of the eigenvalues within each cluster; see Section \ref{sec:approx_spec_mom} for more details on the method. Using \eqref{M1_L} and \eqref{M2_L}, we finally obtain the spectral moments $\mathcal{M}_1(L)=1.50$ and $\mathcal{M}_2(L)=2.35$, which are close to the exact values. From \eqref{mean_deg} and \eqref{quad_mean_deg}, one obtains good approximations of the mean degree $\mathcal{D}_1(\mathcal{G})$ and quadratic mean degree $\mathcal{D}_2(\mathcal{G})$. The results are summarized in Table \ref{tab:example3}. We also performed similar simulations for $100$ different random networks and computed in an automatic way the spectral moments of the Laplacian matrix. Table \ref{tab:stat_example3} shows that the mean relative error is of the order of $10\%$.\hfill $\diamond$

\begin{figure}[htbp]
\centering
\includegraphics[width=8cm]{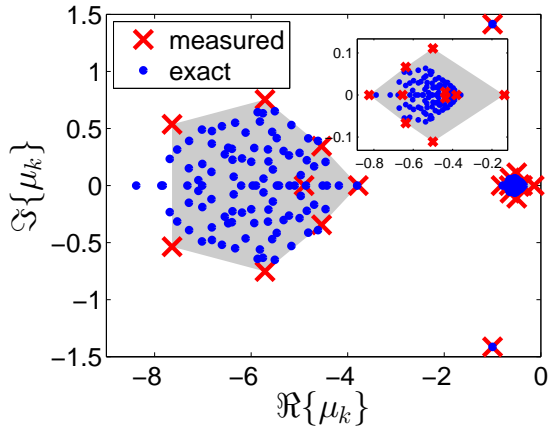}
\caption{Identification of the eigenvalues of $K$ in the case of a large network using measurements at one vertex. Simulation parameters are given in Appendix \ref{app_examples}. The convex hull (in gray) of the measured eigenvalues provides a good approximation of the spectral moments of $K$. The inset shows the cluster of eigenvalues lying near the origin.}
\label{fig:example3}
\end{figure}

\end{example}

\begin{table}[h]
	\centering
		\begin{tabular}{ccc}
		\hline
		& estimated & exact \\
		\hline
		$\mathcal{M}_1(K)$ & -3.25 & -3.23 \\
		$\mathcal{M}_1(L)$ & 1.50 & 1.48 \\
		$\mathcal{D}_1(\mathcal{G})$ & 1.50 & 1.48 \\
		\hline
		\end{tabular}
		\hspace{1cm}
		\begin{tabular}{ccc}
		\hline
		& estimated & exact \\
		\hline
		$\mathcal{M}_2(K)$ & 18.59 & 18.23 \\
		$\mathcal{M}_2(L)$ & 2.35 & 2.30 \\
		$\mathcal{D}_2(\mathcal{G})$ & $2.26 \leq \cdots \leq 2.35$ & 2.27\\
		\hline
		\end{tabular}
		\caption{Identification of the spectral moments and mean vertex degrees of a large graph.}
		\label{tab:example3}
\end{table}

\begin{table}[h]
	\centering
		\begin{tabular}{lcc}
		\hline
		  & $\mathcal{M}_1(L)$ & $\mathcal{M}_2(L)$ \\
			\hline
			mean error (absolute) & 0.10 & 0.29 \\
			mean error (relative) & 0.07 & 0.13 \\
		root mean squared error (absolute) & 0.13 & 0.37 \\
		root mean squared error (relative) & 0.09 & 0.16 \\
		\hline
		\end{tabular}
		\caption{The spectral moments $\mathcal{M}_1(L)$ and $\mathcal{M}_2(L)$ are estimated in an automatic way for $100$ different random networks. The mean relative error is of the order of $10\%$.}
		\label{tab:stat_example3}
\end{table}

\subsection{Estimation of the spectral moments with non-identical units}

Now we assume that the units are not identical and that their local dynamics are randomly distributed with known mean and variance (we will comment on the case of unknown mean and variance in Remark \ref{rem:dyn_measured}).  
We can then obtain an estimation of the spectral properties of the dynamics produced by the same network but with identical units, so that the results of Section \ref{sec:stat_identical} can be used to compute the moments of the Laplacian matrix.

We consider the local dynamics \eqref{local_dyn_nonident}, with the objective of estimating the spectral moments of $L$ from the spectral moments of $K+\delta K$ (see \eqref{delta_K}). The matrix $\delta K$ is a random block-diagonal matrix and the nonzero entries of $\delta K$ are assumed to be independent random variables of zero mean and standard deviation $s$, i.e. $\delta K = \mathrm{diag}(\delta A_1 \cdots \delta A_n)$ with 
\begin{equation}
\label{random_delta_K}
 \qquad \begin{array}{rcll}
\mathbb{E}([\delta A_k]_{ij}) & = & 0  \\
\mathbb{E}([\delta A_k]_{ij}^2) & = & s^2 \\
\mathbb{E}[[\delta A_k]_{ij} [\delta A_k]_{i'j'}] & = & 0 & \forall (i,j)\neq (i',j')
\end{array}
\end{equation}
for $k=1,\dots,n$ and $i,j =1,\dots,m$.

The spectral moments of $K$ are related to the moments of the averaged spectral density of $K+\delta K$. In particular, we will show that $\mathcal{M}_1(K)=\mathbb{E}(\mathcal{M}_1(K+\delta K))$ and $\mathcal{M}_2(K)=\mathbb{E}(\mathcal{M}_2(K+\delta K))-s^2$. Then estimations $\widehat{\mathcal{M}_1}(L)$ and $\widehat{\mathcal{M}_2}(L)$ of the spectral moments of $L$ can be computed by considering the measured (random) values $\mathcal{M}_1(K+\delta K)$ and $\mathcal{M}_2(K+\delta K)$ instead of $\mathcal{M}_1(K)$ and $\mathcal{M}_2(K)$. The expectation of $\widehat{\mathcal{M}_1}(L)$ and $\widehat{\mathcal{M}_2}(L)$ (with respect to the randomness on $\delta K$) is equal to the exact spectral moments of $L$. These results are summarized in the following proposition. The proof is given in Appendix \ref{sec:app_proofs}.
\begin{proposition}
\label{prop_rand_delta_K}
Assume that $\delta K$ is a block-diagonal matrix that satisfies \eqref{random_delta_K} and consider
\begin{eqnarray}
\widehat{\mathcal{M}_1}(L) & = & \frac{-m \mathcal{M}_1(K+\delta K) +\tr (A)}{C^TB} \label{M1_estim} \\
\widehat{\mathcal{M}_2}(L) & =  & \frac{m \mathcal{M}_2(K+\delta K) -ms^2 -\tr (A^2) + 2 \widehat{\mathcal{M}_1}(L)\tr (ABC^T)}{(C^TB)^2}\,. \label{M2_estim}
\end{eqnarray}
Then
\begin{eqnarray*}
\mathbb{E}(\widehat{\mathcal{M}_1}(L)) & = & \mathcal{M}_1(L) \\
\mathbb{E}(\widehat{\mathcal{M}_2}(L)) & = & \mathcal{M}_2(L) \,.
\end{eqnarray*}
Moreover, we have
\begin{eqnarray*}
\var(\widehat{\mathcal{M}_1}(L))  & =  & \frac{m s^2}{n (C^TB)^2} \\
\var(\widehat{\mathcal{M}_2}(L)) & = & \mathcal{O} \left(\frac{1}{n}+ \frac{\mathcal{D}_1(\mathcal{G})}{n} + \frac{\mathcal{D}_2(\mathcal{G})}{n} \right)
\end{eqnarray*}
where $\mathcal{D}_1(\mathcal{G})$ and $\mathcal{D}_2(\mathcal{G})$ are the mean vertex degree and the quadratic mean vertex degree, respectively.
\end{proposition}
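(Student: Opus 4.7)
The plan is to exploit the block-diagonal, mean-zero, independent-entry structure of $\delta K$ in \eqref{random_delta_K} so that every claim reduces to computing expectations and variances of three traces: $\tr(\delta K)$, $\tr(K\delta K)$, and $\tr(\delta K^2)$. Once these are in hand, Proposition~\ref{prop:moments} together with the algebraic form of \eqref{M1_estim}--\eqref{M2_estim} does the rest by linearity.

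For unbiasedness, the mean-zero assumption gives $\mathbb{E}[\tr(\delta K)] = 0$, so $\mathbb{E}[\mathcal{M}_1(K+\delta K)] = \mathcal{M}_1(K)$, and comparing \eqref{M1_estim} with \eqref{M1_L} yields the first expectation identity. For the second, expand $\tr((K+\delta K)^2) = \tr(K^2) + 2\tr(K\delta K) + \tr(\delta K^2)$. The cross term vanishes in expectation entry-by-entry, while the pairwise independence in \eqref{random_delta_K} leaves only the $i=j$ contributions to $\tr(\delta K^2)$, giving $\mathbb{E}[\tr(\delta K^2)] = nms^2$. The $-ms^2$ correction in \eqref{M2_estim} is precisely the shift needed to cancel this extra $+s^2$ in $\mathbb{E}[\mathcal{M}_2(K+\delta K)]$, and substituting into \eqref{M2_L} together with the already-established unbiasedness of $\widehat{\mathcal{M}_1}(L)$ delivers $\mathbb{E}[\widehat{\mathcal{M}_2}(L)] = \mathcal{M}_2(L)$.

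The variance of $\widehat{\mathcal{M}_1}(L)$ is a one-liner: the $nm$ diagonal entries of $\delta K$ are independent with variance $s^2$, so $\var(\tr(\delta K)) = nms^2$; hence $\var(\mathcal{M}_1(K+\delta K)) = s^2/(mn)$, which \eqref{M1_estim} rescales to $ms^2/(n(C^TB)^2)$.

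The hard part is $\var(\widehat{\mathcal{M}_2}(L))$. I would write $\widehat{\mathcal{M}_2}(L)$ as an affine combination of $\mathcal{M}_2(K+\delta K)$ and $\widehat{\mathcal{M}_1}(L)$, and invoke the triangle/Cauchy--Schwarz inequalities to reduce the task to bounding each variance separately (the covariance is then absorbed). The fluctuation $\tr((K+\delta K)^2) - \mathbb{E}[\tr((K+\delta K)^2)]$ splits into $2\tr(K\delta K)$ and $\tr(\delta K^2) - nms^2$. The first factors over blocks as $\sum_k \tr((A - d_k BC^T)\delta A_k)$, and independence of the entries across and within blocks yields $\var(\tr(K\delta K)) = s^2 \sum_k \|A - d_k BC^T\|_F^2$; expanding the Frobenius norm and applying $\sum_k d_k = n\mathcal{D}_1(\mathcal{G})$ and $\sum_k d_k^2 = n\mathcal{D}_2(\mathcal{G})$ produces exactly the $\mathcal{D}_1(\mathcal{G}) + \mathcal{D}_2(\mathcal{G})$ dependence. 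The second piece $\tr(\delta K^2) = \sum_k \tr(\delta A_k^2)$ is a sum of $n$ independent terms with bounded variance (under a standing finite-fourth-moment assumption on the entries of $\delta K$), contributing $\mathcal{O}(n)$. Dividing by $n^2$ coming from $\mathcal{M}_2 = \tr/(mn)$ squared brings both contributions to $\mathcal{O}((1+\mathcal{D}_1(\mathcal{G})+\mathcal{D}_2(\mathcal{G}))/n)$, as required. The main obstacle is the Frobenius-norm bookkeeping that couples the random perturbation $\delta K$ with the deterministic block-diagonal structure $A - d_k BC^T$ of $K$: it is precisely this coupling that forces the degree moments $\mathcal{D}_1(\mathcal{G})$ and $\mathcal{D}_2(\mathcal{G})$ to appear in the final variance bound.
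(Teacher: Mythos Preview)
Your proposal is correct and follows essentially the same route as the paper: both reduce everything to the moments of $\tr(\delta K)$, $\tr(K\delta K)$, and $\tr(\delta K^2)$, and both identify $\var(\tr(K\delta K))$ as the source of the $\mathcal{D}_1(\mathcal{G})$ and $\mathcal{D}_2(\mathcal{G})$ terms in the final bound. The only differences are cosmetic---you package the key step via the block-level Frobenius identity $\var(\tr(K\delta K)) = s^2\sum_k\|A-d_kBC^T\|_F^2$ and absorb the cross-covariance by Cauchy--Schwarz, whereas the paper writes the same quantity as the entry-level sum $s^2\sum_{\delta K_{ij}\neq 0}K_{ij}^2$ and computes $\cov(\mathcal{M}_1(K+\delta K),\mathcal{M}_2(K+\delta K))$ explicitly.
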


In Proposition \ref{prop_rand_delta_K}, the estimated values $\widehat{\mathcal{M}_1}(L)$ and $\widehat{\mathcal{M}_2}(L)$ are assumed to be obtained with a perfect estimation of the moments of $K+\delta K$ (computed from the eigenvalues obtained with the DMD algorithm), but this is not the case in practice. For large networks, the variance of the estimated moments \eqref{M1_estim} and \eqref{M2_estim} is significantly smaller than the error on the estimation of the moments of $K+\delta K$, so that \eqref{M1_estim}-\eqref{M2_estim} provide approximations of the spectral moments of $L$ that are almost as good as in the case of identical units.

\begin{remark}[Unknown local dynamics]
\label{rem:dyn_measured}
If we suppose that the average local dynamics of the units is not known (i.e. $F$ is not known in \eqref{local_dyn_nonident}), then $A=\partial F/\partial x (x^*)$ is estimated by $A+\delta A$ (e.g. measured on one unit), where $\mathbb{E}(\delta A_{ij})=0$ and $\mathbb{E}(\delta A_{ij}^2)=s^2$ for all $i,j$. In particular, $\mathbb{E}(\tr (A+\delta A))=\tr (A)$ and $\mathbb{E}(\tr (A+\delta A)^2)=\tr (A^2) + m s^2$ and we have $\mathbb{E}(\widehat{\mathcal{M'}_1}(L))=\mathcal{M}_1(L)$ and $\mathbb{E}(\widehat{\mathcal{M'}_2}(L))=\mathcal{M}_2(L)$, with
\begin{eqnarray*}
\widehat{\mathcal{M'}_1}(L) & = & \frac{-m \mathcal{M}_1(K+\delta K) +\tr (A+\delta A)}{C^TB} \\
\widehat{\mathcal{M'}_2}(L) & =  & \frac{m \mathcal{M}_2(K+\delta K) -\tr (A+\delta A)^2 + 2 \widehat{\mathcal{M}_1'}(L)\tr (ABC^T)}{(C^TB)^2}\,.
\end{eqnarray*}
We remark that the variance $s^2$ of the distribution of dynamics does not need to be known in this case.
\end{remark}

\begin{remark}[Other distributions and two different populations] So far we have considered that all nonzero entries of $\delta K$ are independently and identically distributed. We can also have another situation where the values of the subsystems are strongly correlated. Consider the random matrix
\begin{equation*}
\delta K = \textrm{diag}(\varepsilon_1 \delta A,\dots, \varepsilon_n \delta A) = \mathcal{E} \otimes \delta A
\end{equation*}
where $\mathcal{E}$ is a random diagonal matrix whose elements $\varepsilon_i$ are randomly distributed and satisfy $\mathbb{E}(\varepsilon_i)=0$, $\mathbb{E}(\varepsilon_i^2)=s^2$, and $\mathbb{E}(\varepsilon_i \varepsilon_j)=0$ for $i\neq j$. In this case, we can show that $\mathbb{E}(\widehat{\mathcal{M''}_1}(L))=\mathcal{M}_1(L)$ and $\mathbb{E}(\widehat{\mathcal{M''}_2}(L))=\mathcal{M}_2(L)$, with
\begin{equation}
\label{moments_2_pop}
\begin{array}{rcl}
\widehat{\mathcal{M''}_1}(L) & = & \displaystyle \frac{-m \mathcal{M}_1(K+\delta K) +\tr (A)}{C^TB} \\
\widehat{\mathcal{M''}_2}(L) & =  & \displaystyle \frac{m \mathcal{M}_2(K+\delta K) -s^2 \frac{\tr(\delta A^2)}{m} -\tr (A^2) + 2 \widehat{\mathcal{M}_1''}(L)\tr (ABC^T)}{(C^TB)^2}
\end{array}
\end{equation}
and the variances are similar to the variances obtained in Proposition \ref{prop_rand_delta_K}.

When the probability distribution function of $\varepsilon_i$ is a (weighted) sum of two Dirac functions at $\varepsilon=\pm 1$, the local dynamics of the units is randomly chosen between two possible dynamics, i.e. $\dot{x}_k=F(x_k) + \delta F(x_k)+G(x_k) u_k$ and $\dot{x}_k=F(x_k) - \delta F(x_k)+G(x_k) u_k$. In this case, the spectral moments are given by \eqref{moments_2_pop} with $s=1$. \hfill $\diamond$
\end{remark}

We could also consider a more general situation, where the subsystems $\delta A_k$ are independently and identically distributed. This is however out of the scope of this paper.

\begin{example}[Non-identical units]
\label{example4}
We consider the same network as in Example \ref{example3} but assume that the dynamics of the units are randomly distributed with a standard deviation $s=0.2$, i.e. we consider the system $\dot{X}=(K+\delta K)X$ with the entries of $\delta K$ satisfying \eqref{random_delta_K}. With the measurement of one state of one unit, the DMD algorithm provides an approximation of the spectrum of the random matrix $K+\delta K$ (Figure \ref{fig:example4}). Using \eqref{M1_estim} and \eqref{M2_estim}, we obtain an accurate estimation $\widehat{\mathcal{M}_1}(L)=1.52$ and $\widehat{\mathcal{M}_2}(L)=2.15$. The results are summarized in Table \ref{tab:example4}. Note that the variances satisfy $\var(\widehat{\mathcal{M}_1}(L))<0.0001$ and $\var(\widehat{\mathcal{M}_2}(L))<0.1$, and are negligible with respect to the error induced by the computation of the moments $\mathcal{M}_1(K+\delta K)$ and $\mathcal{M}_2(K+\delta K)$. \hfill $\diamond$

\begin{figure}[htbp]
\centering
\includegraphics[width=8cm]{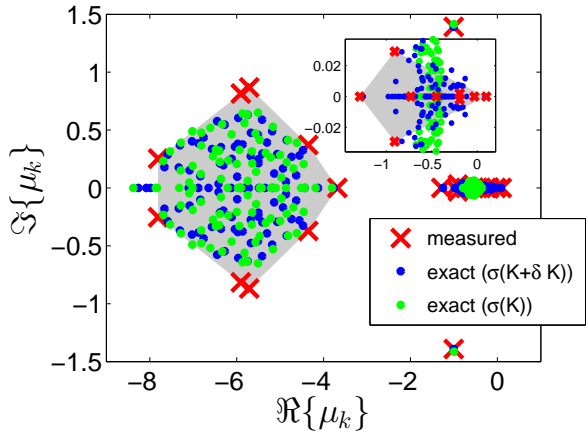}
\caption{Identification of the spectrum of $K+\delta K$ in the case of non-identical units. Simulation parameters are given in Appendix \ref{app_examples}. The eigenvalues of $K+\delta K$ (blue dots) are different from the eigenvalues of $K$ (green dots). They are approximated by eigenvalues obtained with the DMD algorithm (red crosses), whose convex hull (in gray) is used for the computation of the spectral moments of $K+\delta K$. The inset shows the cluster of eigenvalues lying near the origin.}
\label{fig:example4}
\end{figure}

\begin{table}[h]
	\centering
		\begin{tabular}{ccc}
		\hline
		& estimated & exact \\
		\hline
		$\mathcal{M}_1(K+\delta K)$ & -3.28 & -3.22 \\
		$\mathcal{M}_1(K)$ & -3.28 & -3.23 \\
		$\mathcal{M}_1(L)$ & 1.52 & 1.48 \\
		\hline
		\end{tabular}
		\hspace{1cm}
	  \begin{tabular}{ccc}
		\hline
		& measured & exact \\
		\hline
		$\mathcal{M}_2(K+\delta K)$ & 17.80 & 18.17 \\
		$\mathcal{M}_2(K)$ & 17.76 & 18.23 \\
		$\mathcal{M}_2(L)$ & 2.15 & 2.30 \\
		\hline
		\end{tabular}
		\caption{Identification of the spectral moments of a large graph with a heterogeneous population.}
		\label{tab:example4}
\end{table}

\end{example}

\subsection{Spectral moments of the unweighted Laplacian matrix}

We have considered so far the general case of weighted graphs $\mathcal{G}$. It is also of interest to infer the spectral properties of unweighted graphs $\bar{\mathcal{G}}$, which have the same vertex and edge sets as $\mathcal{G}$ but a weight function $\bar{w}(i,j)=1$ for all $(i,j)\in E$. The degree $\bar{d}_k$ of a vertex $k$ of $\bar{\mathcal{G}}$ corresponds to the number of connections of the unit $k$, and the spectral moments $\mathcal{M}(\bar{L})$ of the unweighted Laplacian matrix $\bar{L}$ are related to the distribution of this number of connections.


We consider that the weights $W_{ij}$ are independent random variables of mean $r>0$ and standard deviation $s>0$, i.e.
\begin{equation}
\label{random_weights}
\begin{array}{rcl}
\mathbb{E}(W_{ij}) & = & r  \\
\mathbb{E}(W_{ij}^2) & = & r^2+ s^2  \\
\mathbb{E}[W_{ij} W_{i'j'}] & = & r^2 \qquad \forall (i,j)\neq (i',j')
\end{array}
\end{equation}
for all $(i,j) \in E$. In this case, the Laplacian matrix $L$ can be seen as a random matrix. Provided that $r$ and $s$ are known, one can estimate the spectral moments of the unweighted Laplacian matrix $\bar{L}$ from the expectation of the spectral moments of the weighted Laplacian matrix $L$. The proof can be found in Appendix \ref{sec:app_proofs}.
\begin{proposition}
\label{prop:rand_weights}
If the edges of the network have random weights distributed according to \eqref{random_weights}, then
\begin{eqnarray}
\displaystyle
\mathcal{M}_1(\bar{L}) & = & \mathbb{E}\left(\frac{\mathcal{M}_1(L)}{r}\right) \,, \label{M1_unw}\\
\displaystyle
\mathcal{M}_2(\bar{L})& = & \mathbb{E}\left(\frac{\mathcal{M}_2(L)}{r^2}-\frac{s^2 \, \mathcal{M}_1(L)}{r^3}\right) \label{M2_unw}\,.
\end{eqnarray}
Moreover, we have
\begin{equation*}
\var{\left(\frac{\mathcal{M}_1(L)}{r}\right)} =\frac{s^2}{r^2} \frac{\mathcal{D}_1(\bar{\mathcal{G}})}{n} < \frac{s^2}{r^2}\,.
\end{equation*}
\end{proposition}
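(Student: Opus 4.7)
The plan is to expand the relevant traces in terms of the individual random weights $W_{ij}$ and then compute their expectations using the moment assumptions in \eqref{random_weights}.

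First I would handle the first moment. Writing $\mathcal{M}_1(L) = \frac{1}{n}\tr(L) = \frac{1}{n}\sum_i d_i = \frac{1}{n}\sum_{(i,j)\in E} W_{ij}$, linearity of expectation together with $\mathbb{E}(W_{ij})=r$ for every present edge gives $\mathbb{E}(\mathcal{M}_1(L)) = r |E|/n = r \, \mathcal{D}_1(\bar{\mathcal{G}}) = r\,\mathcal{M}_1(\bar L)$, which rearranges to \eqref{M1_unw}. The variance computation then uses the assumed independence of distinct $W_{ij}$: $\var(\mathcal{M}_1(L)) = \frac{1}{n^2}\sum_{(i,j)\in E}\var(W_{ij}) = \frac{s^2 |E|}{n^2} = \frac{s^2 \mathcal{D}_1(\bar{\mathcal{G}})}{n}$, and dividing by $r^2$ yields the claimed variance. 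The upper bound follows since $\mathcal{D}_1(\bar{\mathcal{G}}) \leq n-1 < n$.

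The more delicate part is the second moment. I would expand
\begin{equation*}
\tr(L^2) = \sum_i L_{ii}^2 + \sum_{i\neq j} L_{ij}L_{ji} = \sum_i d_i^2 + \sum_{i\neq j} W_{ij}W_{ji},
\end{equation*}
using that $L_{ii} = d_i$ and $L_{ij} = -W_{ij}$ for $i\neq j$ (so the signs cancel in the cross term). For the first sum, $\mathbb{E}(d_i^2) = \sum_{k} \mathbb{E}(W_{ik}^2) + \sum_{k\neq k'} \mathbb{E}(W_{ik}W_{ik'})$ where each sum ranges only over $k$ (resp.\ $k,k'$) corresponding to edges incident to $i$ in $\bar{\mathcal{G}}$. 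Applying \eqref{random_weights} gives $\mathbb{E}(d_i^2) = \bar d_i(r^2+s^2) + \bar d_i(\bar d_i-1)r^2 = r^2 \bar d_i^2 + s^2 \bar d_i$. For the second sum, $\mathbb{E}(W_{ij}W_{ji}) = r^2$ when both $(i,j),(j,i)\in E$ (so the factor is $r^2 \bar W_{ij}\bar W_{ji}$) and vanishes otherwise (a zero factor).

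Combining these contributions, $\mathbb{E}(\tr(L^2)) = r^2\bigl(\sum_i \bar d_i^2 + \sum_{i\neq j}\bar W_{ij}\bar W_{ji}\bigr) + s^2 \sum_i \bar d_i = r^2\tr(\bar L^2) + s^2\tr(\bar L)$, hence $\mathbb{E}(\mathcal{M}_2(L)) = r^2 \mathcal{M}_2(\bar L) + s^2 \mathcal{M}_1(\bar L)$. Solving for $\mathcal{M}_2(\bar L)$ and substituting the identity \eqref{M1_unw} for $\mathcal{M}_1(\bar L)$ (using linearity of expectation to absorb everything into a single expectation) produces \eqref{M2_unw}. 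I expect the main bookkeeping obstacle to be the second-moment expansion: one must be careful to distinguish diagonal from off-diagonal terms, to keep track of which pairs $(i,k),(i,k')$ correspond to actual edges, and to use the directedness of the graph correctly so that the $\bar W_{ij}\bar W_{ji}$ factor appears naturally — once this bookkeeping is done the computation is a clean application of \eqref{random_weights}.
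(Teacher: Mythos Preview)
Your proposal is correct and follows essentially the same route as the paper: both compute $\mathbb{E}(\tr L)$, $\var(\tr L)$, and $\mathbb{E}(\tr L^2)$ by expanding in the random weights $W_{ij}$ and applying \eqref{random_weights}, arriving at the identical intermediate identities $\mathbb{E}(\mathcal{M}_1(L))=r\,\mathcal{M}_1(\bar L)$ and $\mathbb{E}(\mathcal{M}_2(L))=r^2\mathcal{M}_2(\bar L)+s^2\mathcal{M}_1(\bar L)$. The only difference is cosmetic: the paper keeps the bookkeeping via the entries $\bar L_{ij}$ (which act as $\{0,-1\}$ indicators for off-diagonal terms), whereas you work directly with the edge set $E$ and unweighted degrees $\bar d_i$.
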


Spectral network identification can be used to obtain the spectral moments of the unweighted Laplacian matrix, which in turn provide information on the distribution of the number of connections in the network (see \eqref{mean_deg} and \eqref{quad_mean_deg}). When $s/r\ll1$, the variance on $\mathcal{M}_1(L)/r$ is small and the present identification framework can provide an accurate estimation of the mean number of connections. This is illustrated in the following example.

\begin{example}
\label{example5}
Consider the same network and dynamics as in Example \ref{example3}. Since the weights are uniformly distributed on $[0,0.1]$, we have $r=0.05$ and $s^2=r^2/3$ in \eqref{random_weights}. Using \eqref{M1_unw} and \eqref{M2_unw} with the values estimated in Example \ref{example3}, we obtain $\mathcal{M}_1(\bar{L})=30.03$ (exact value: $\mathcal{M}_1(\bar{L})=29.77$) and $\mathcal{M}_2(\bar{L})=930.55$ (exact value: $\mathcal{M}_2(\bar{L})=916.23$). It follows from \eqref{mean_deg} and \eqref{quad_mean_deg} that the mean number of connections is estimated as $\mathcal{D}_1(\bar{\mathcal{G}})=30.03$ (exact value: $\mathcal{D}_1(\bar{\mathcal{G}})=29.77$) and the quadratic mean number of connections is bounded by $902.07 \leq \mathcal{D}_2(\bar{\mathcal{G}}) \leq 930.55$ (exact value: $\mathcal{D}_2(\bar{\mathcal{G}})=907.33$).

We can now compare the result obtained with spectral network identification to the result obtained with a direct observation of a random vertex. Performing spectral network identification for $100$ different networks (see results in Table \ref{tab:stat_example3}), we compute the mean squared error on $\mathcal{D}_1(\bar{\mathcal{G}})$, which is equal to $7$. (Indeed, the variance $\var{(\mathcal{M}_1(L)/r)}=0.1$ is small and, according to Table \ref{tab:stat_example3}, the mean squared error on $\mathcal{M}_1(L)/r$ is $(0.13/0.05)^2=6.76$.)
In contrast, the mean squared error obtained with a direct measurement on a random vertex is equal to the variance of the degree distribution, which is given by $n p_{edge}(1-p_{edge})=21$ since the distribution is binomial. Our spectral network identification approach yields thus here a mean squared error three times smaller than a direct observation.\hfill $\diamond$
\end{example}

As suggested in the example above, using the present identification framework might be more accurate for computing the mean number of connections than a direct observation of a small group of randomly chosen vertices, in particular when the degree distribution is characterized by a large variance. For instance, we have considered random networks of $n = 100$ vertices whose number of connections (i.e. unweighted degree) is randomly chosen according to a Gaussian distribution with a large standard deviation (mean: $34$, standard deviation: $28$). The weights of the edges are randomly distributed according to a uniform distribution on $[0, 0.01]$. For the same local dynamics as in Example \ref{example5}, we use spectral network identification to estimate the mean vertex degree of $100$ networks, from measurements at one vertex. We obtain a mean squared error equal to $73$. When the number of connections is observed directly on a random vertex, the mean squared error is $28^2=784$. It would be necessary in this case to measure and average the number of connections of more than $10$ vertices in order to obtain the same precision as the spectral network identification framework (requiring the measure of only one vertex). \alex{This difference could be even more significant with other complex random graphs characterized by a higher variance of the degree distribution (e.g. sparse power-law random graphs).}

\section{Numerical implementation}
\label{sec:numerical}

This section describes the numerical implementation of the spectral network identification framework and proposes a few guidelines mainly based on empirical observations on the performances of the DMD algorithm. The complete numerical procedure for spectral network identification is summarized in Appendix \ref{app_algo}.

\subsection{Time series and data matrix}

The network dynamics is measured through the observation function $f:\mathbb{R}^{mn} \to \mathbb{R}^p$, for $r$ different initial conditions. Then the measurements yield the data points
\begin{equation}
\label{time_series}
z_k = \mat{c}{f(X^{(1)}(k \Delta t)) \\
\vdots \\
f(X^{(r)}(k \Delta t))} \in \mathbb{R}^{rp}\,, \qquad k=0,\dots,K
\end{equation}
where $X^{(l)}(t)$, $l=1,\dots,r$, is a trajectory of the system and $\Delta t$ is the sampling period. From the data points, we construct the data matrix
\begin{equation}
\label{Z}
Z=\mat{cccc} 
{| & | & & | \\
z_0 & z_1 & \cdots & z_K \\
| & | & & |}
\end{equation}
to be used as an input to the DMD algorithm. The number of columns of the matrix is equal to the number $K+1$ of snapshots and the number of rows is equal to the number $r$ of initial conditions multiplied by the number $p$ of observations. \alex{The number of eigenvalues obtained with the DMD algorithm is equal to the number of rows.
If this number is small, it should be increased to improve the efficiency of the DMD algorithm and compute more eigenvalues. This can be done without additional experiments or observations, by considering delays. We can indeed use the fact that,} if $X(t)$ is a trajectory of the system (associated with an initial condition $X(0)$), then $X(t+\delta \,\Delta t)$ is another trajectory of the system (associated with the initial condition $X(\delta \Delta t)$). One can therefore construct a new data matrix
with $crp$ rows (and a few less columns) by considering $c$ sequences of data points shifted in $\delta$ increments, i.e. 
$\{z_0, \dots, z_{K-(c-1)\delta}\}$, $\{z_{\delta}, \dots,z_{K-(c-2)\delta}\}$, \dots , $\{z_{(c-1)\delta},\dots, z_K\}$.
\alex{It is clear that using $n$ delayed versions of the same time series does not bring more information, but in theory, it allows to retrieve $n$ eigenvalues with only one observation, a fact which is related in spirit to Whitney embedding theorem. In practice, however, better numerical results are obtained with several experiments and less delayed time series. It also appeared in our simulations} that $c=2$ or $c=3$ shifted sequences yield the best results. However, the effect of $c$ on the performance of the algorithm has not been extensively studied, and might depend on the other simulation parameters.

\subsection{DMD algorithm}

A detailed description of the DMD algorithm is given in Appendix \ref{app_algo}. The input is the data matrix $Z=[z_0 \, z_1 \, \cdots \, z_{q_2}]$ constructed from a sequence of data points $z_j\in \mathbb{R}^{q_1}$ (note that $q_1=crp$ and $q_2=K-(c-1)\delta$ according to the previous section). The DMD algorithm computes a matrix $T \in \mathbb{R}^{q_1 \times q_1}$ such that
\begin{equation*}
z_{j+1} \approx T z_j \quad \forall j \in \{0,\cdots,q_2-1\} \,.
\end{equation*}
In particular, the algorithm yields the so-called DMD modes $\phi_k$ and eigenvalues $\tilde{\nu}_k$, which are the eigenvectors and eigenvalues of $T$, and it follows that
\begin{equation}
\label{DMD_expan}
z_j \approx \sum_{k=1}^{q_1} \phi_k \tilde{\nu}_k^j\,.
\end{equation}
Comparing \eqref{DMD_expan} with \eqref{state_evol_nonlin}, we can see that the Koopman modes are equal to the DMD modes $\phi_k$, and the Koopman eigenvalues are given by $\tilde{\mu}_k=\log(\tilde{\nu}_k)/\Delta t$.

\subsection{Approximating spectral moments with few eigenvalues}
\label{sec:approx_spec_mom}

In the case of large networks, the eigenvalues $\tilde{\mu}_k$ computed with the DMD algorithm are not equal to the exact eigenvalues $\mu_k$. In particular, the number $q_1=crp$ of eigenvalues $\tilde{\mu}_k$ is much smaller than the number $mn$ of eigenvalues $\mu_k$. However, it is observed that the convex hulls of clusters of eigenvalues $\tilde{\mu}_k$ are good approximations of clusters of exact eigenvalues $\mu_k$. There are typically $n_c=m$ clusters (or sometimes less if they overlap) that can be identified using k-means clustering. Denoting by $\mathcal{S}_j$, $j=1,\dots,n_c$, the convex hulls of clusters of measured eigenvalues $\tilde{\mu}_k$, we can compute the moments of area
\begin{equation}
\label{moments_area}
\begin{array}{rcll}
\displaystyle
\mathcal{A}(\mathcal{S}_j) & = & \displaystyle \int_{\mathcal{S}_j} \, dx dy &\qquad \textrm{area} \\
I_x(\mathcal{S}_j) & = & \displaystyle \frac{1}{\mathcal{A}(\mathcal{S}_j)} \int_{\mathcal{S}_j} x \, dx dy & \qquad \textrm{centroid (with respect to $x$-axis)}\\
I_{xx}(\mathcal{S}_j) & = & \displaystyle \int_{\mathcal{S}_j} y^2 \, dx dy & \qquad \textrm{second moment of area (with respect to $x$-axis)} \\
I_{yy}(\mathcal{S}_j) & = & \displaystyle \int_{\mathcal{S}_j} x^2 \, dx dy &  \qquad \textrm{second moment of area (with respect to $y$-axis)}
\end{array}
\end{equation}
where an eigenvalue $\mu \in \mathcal{S}_j$ is assigned the coordinates $(x,y)=(\Re\{\mu\},\Im\{\mu\})$. Assuming that the eigenvalues are uniformly distributed in the clusters, we obtain the approximation of the spectral moments
\begin{equation}
\label{area_moment_K}
\begin{array}{rcl}
\mathcal{M}_1(K) & \approx & \displaystyle \frac{1}{n_c} \sum_{j=1}^{n_c} I_x(\mathcal{S}_j) \\
\mathcal{M}_2(K) & \approx & \displaystyle \frac{1}{n_c} \sum_{j=1}^{n_c} \frac{I_{yy}(\mathcal{S}_j)-I_{xx}(\mathcal{S}_j)}{\mathcal{A}(\mathcal{S}_j)}
\end{array}
\end{equation}
where we used the fact that there is the same number of eigenvalues in each cluster and that $\mu \in \bigcup_j \mathcal{S}_j \Rightarrow \mu^c \in \bigcup_j \mathcal{S}_j$ (so that $\sum_k \mu_k^2 = \sum_k \Re\{\mu_k\}^2-\sum_k \Im\{\mu_k\}^2$).

\section{Applications}
\label{sec:examples}

In this section, spectral network identification is illustrated with several applications. It is shown that, with a few local measurements in the (possibly large) network, one can estimate the minimum, maximum and average number connections, detect the addition of a new vertex, and measure whether two units influence each other indirectly, and belong thus to the same connected component of the network.

\subsection{Minimum and maximum number of connections}
\label{sec:example6}

The spectral network identification framework is efficient to approximate the minimum and maximum number of connections in an undirected graph. It can also be used to detect the addition of a vertex that is weakly connected to the rest of the network.

We aim at identifying a network that was constructed as a random Erd{\H o}s-R\'enyi graph of $n=100$ vertices for which there is an edge between any pair of vertices with a probability $p_{edge}=0.3$ (supposedly not known). The graph is undirected and unweighted. The local dynamics of the units is given by
\begin{equation*}
\begin{array}{ccl}
\dot{x}_k & = & -0.5\,x_k+x_k^3+ 0.05 u_k \,,  \\
y_k & = & x_k \,.
\end{array}
\end{equation*}
With the observation function $f(X)=x_1$ (one measure at one vertex), spectral network identification yields accurate estimations of the algebraic connectivity $\tilde{\lambda}_2=13.34$ and spectral radius $\tilde{\lambda}_n=42.98$ (Figure \ref{fig:example6} (top)). Using \eqref{d_min_max}, we can obtain a good approximation of the minimum and maximum vertex degrees: $d_{min}\geq 13.21$ (exact value: $d_{min}=14$) and $d_{max} \leq 42.55$ (exact value: $d_{max}=40$). Numerical simulations performed on $100$ different random graphs show that the bounds that we estimate on $d_{\min}$ and $d_{\max}$ are typically within $15\%$ of the actual values, see Table \ref{tab:example6}.

\begin{table}[h]
	\centering
			\begin{tabular}{lcc}
			\hline
		  & $d_{min}$ & $d_{max}$ \\
			\hline
			mean error (absolute) & 2.18 & 5.74 \\
			mean error (relative) & 0.11 & 0.14 \\
		root mean squared error (absolute) & 2.66 & 7.73 \\
		root mean squared error (relative) & 0.14 & 0.18 \\
		\hline
		\end{tabular}
		\caption{The minimum and maximum vertex degrees $d_{min}$ and $d_{max}$ are estimated for $100$ different random networks. The mean relative error is within $15\%$ of the actual value.}
		\label{tab:example6}
\end{table}

Now we add a new vertex that is connected to the rest of the network by only one edge. This new vertex is not directly connected to the measured vertex $1$, but is at a distance $3$ of it, i.e. the shortest path between these two vertices contains three edges. \alex{(Note that the average distance between two vertices of the graph is approximately $\ln 100/\ln 30 \approx 1.35$ \cite{Barabasi}.)} It follows from this modification that the minimum vertex degree drops to $d_{min}=1$ and it is remarkable that this can be detected by our estimate. Indeed, the (much smaller) value of the algebraic connectivity $\lambda_2=0.98$ is perfectly captured by the algorithm (Figure \ref{fig:example6} (bottom)), yielding the approximated minimum vertex degree $d_{min} \geq 0.97$. Similar results could be obtained when the graph is weighted.

\begin{figure}[htbp]
\centering
\subfigure{\includegraphics[width=2.5cm]{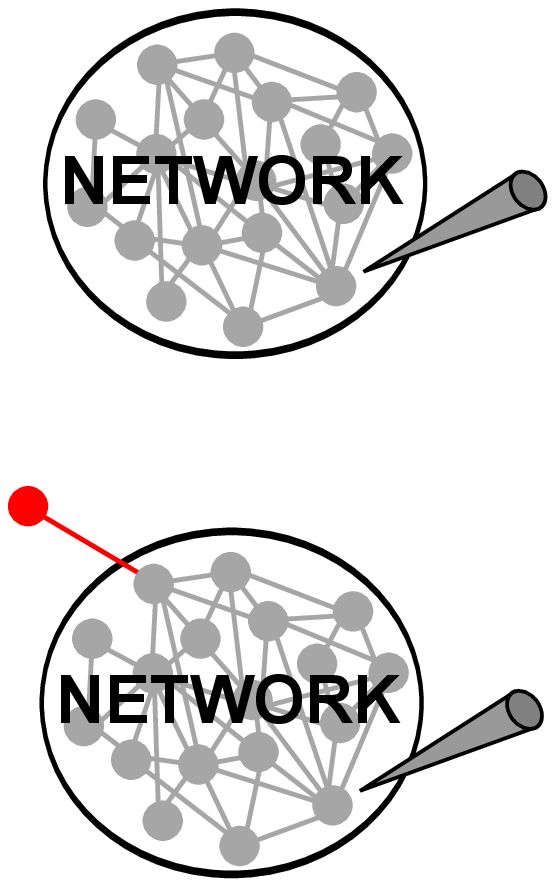}}
\subfigure{\includegraphics[height=5cm]{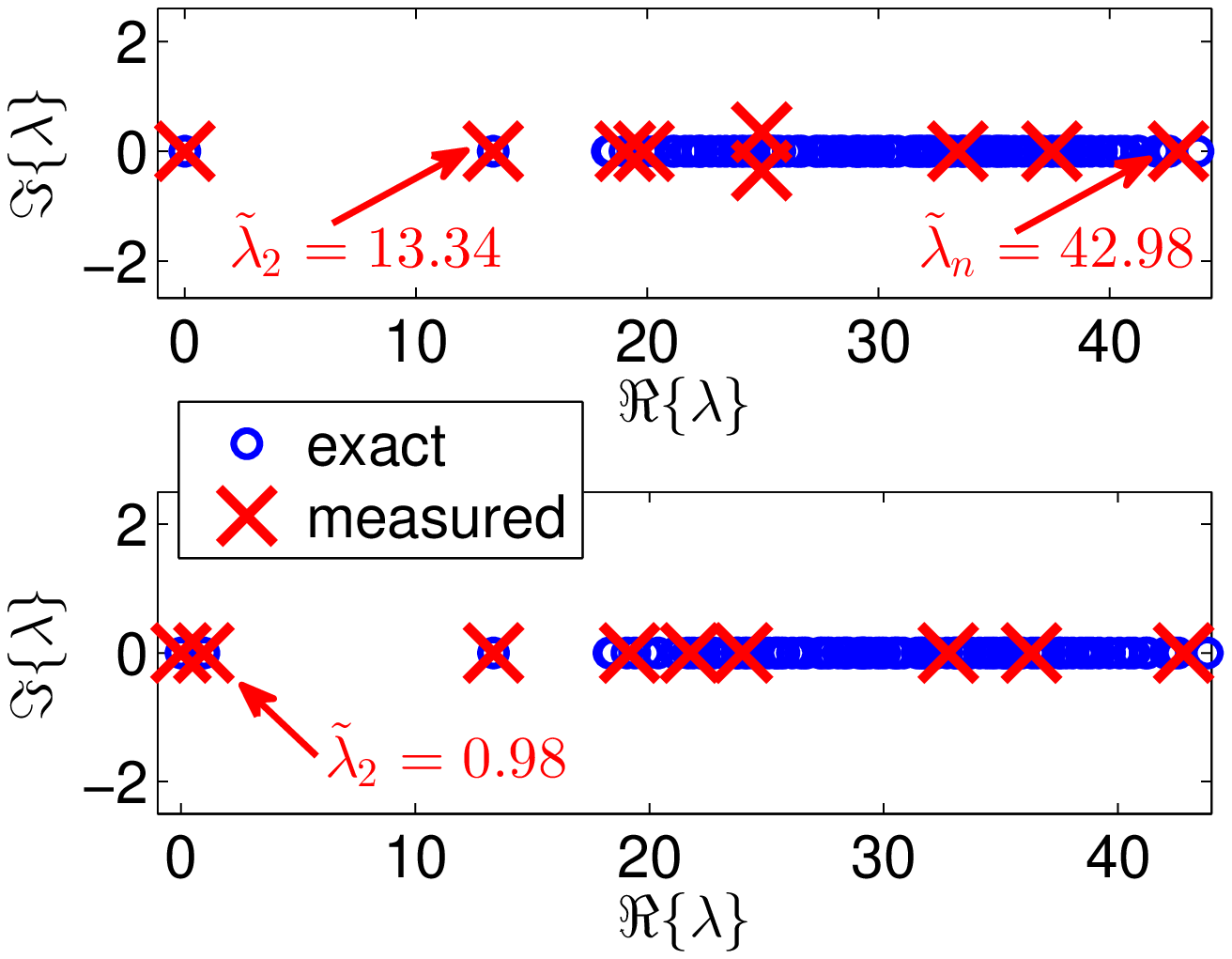}}
\caption{Identification of the algebraic connectivity $\lambda_2$ and spectral radius $\lambda_n$ of a (undirected) random graph. Simulation parameters are given in Appendix \ref{app_examples}. (Top) The approximations $\tilde{\lambda}_2$ and $\tilde{\lambda}_n$ provide a good estimation of the minimum and maximum vertex degrees. (Bottom) A vertex and one edge have been added to the network. The new (smaller) eigenvalue $\lambda_2$ is recovered and captures the minimum degree $d_{min}=1$.}
\label{fig:example6}
\end{figure}

\subsection{Average number of connections in a neural network}
\label{sec:example_Celegans}

We use spectral identification to estimate the average number of connections in a real neural network, through measures of a single neuron that has only one connection to the rest of the network.

We consider the neural network of C. Elegans \cite{Watts_Strogatz}, which is described by an undirected graph of $297$ vertices (i.e. neurons). The weights of the edges are randomly distributed according to a uniform distribution on $[0,0.02]$. The local dynamics of a neuron at vertex $k$ is governed by the FitzHugh-Nagumo equations \cite{FitzHugh,Nagumo}
\begin{equation*}
\begin{array}{ccl}
\mat{c}{\dot{V}_k \\ \dot{w}_k} & = & \mat{c}{-w_k-V_k (V_k-1)^2+1 \\ 0.5(V_k- w_k)}
+ \delta A_k \mat{c}{V_k \\ w_k} +\mat{c}{2 \\ 0} u_k \,, \\
y_k & = & 2 V_k \,,
\end{array}
\end{equation*}
where $V_k$ is the membrane voltage of the neuron, $w_k$ is the recovery variable, and $\delta A_k$ is a random matrix of mean $0$ and variance $s^2=0.1$ that accounts for the heterogeneity in the network.

We measure the membrane voltage variable of the neuron $k=54$ (i.e. $f(X)=V_{54}$) that is connected to only one other neuron, i.e. its unweighted degree is equal to one. Despite such an extremely local measure, we can obtain a fair estimation of the spectral Laplacian moments of the graph. The results are summarized in Figure \ref{fig:example_Celegans} and Table \ref{tab:example_Celegans}. \alex{The first Laplacian moment provides an estimation of the average number of connections $\mathcal{D}_1(\bar{\mathcal{G}})=11.15$ (exact value: $\mathcal{D}_1(\bar{\mathcal{G}})=7.9$). Although not negligible, the error is small compared with the standard deviation of the degree distribution, which is higher than $10$.} In contrast, a direct observation of the number of connections of the observed neuron would provide a poor estimation $\mathcal{D}_1(\bar{\mathcal{G}})=1$.

The second Laplacian moment is underestimated, so that \eqref{quad_mean_deg} cannot provide a good estimation of the quadratic mean number of connections $\mathcal{D}_2(\bar{\mathcal{G}})$. This is mainly due to the fact that the distribution of eigenvalues is not homogeneous (see Figure \ref{fig:example_Celegans}), so that the computation of the spectral moments with a convex hull of computed eigenvalues is not accurate. Other simulations not shown here can provide a better estimate of the second spectral moment, but in that case the estimation of the first moment is less accurate. This suggests that there is a tradeoff to obtain good approximations of the first and second moments. This issue \alex{could be explained by the nonlinearity of the model and the complexity of the network. It} could be tackled by more advanced numerical methods using for instance machine learning techniques to post-process the results.

\begin{figure}[htbp]
\centering
\includegraphics[width=8cm]{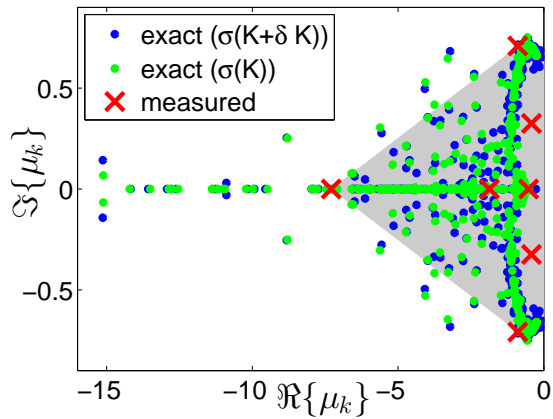}
\caption{Identification of the spectrum of $K+\delta K$ for the neural network of C. Elegans with nonlinear non-identical units. Simulation parameters are given in Appendix \ref{app_examples}. The convex hull (in gray) of the measured eigenvalues provides an approximation of the spectral moments of $K+\delta K$.}
\label{fig:example_Celegans}
\end{figure}

\begin{table}[h]
	\centering
		\begin{tabular}{ccc}
		\hline
		 & estimated & exact \\
		\hline
		$\mathcal{M}_1(K+\delta K)$ & -2.48 & -1.81 \\
		$\mathcal{M}_1(K)$ & -2.48 & -1.82 \\
		$\mathcal{M}_1(L)$, $\mathcal{D}_1(\mathcal{G})$ & 1.12 & 0.78 \\
		$\mathcal{M}_1(\bar{L})$ & 11.15 & 7.90 \\
		\hline
		\end{tabular}
		\hspace{1cm}
	  \begin{tabular}{ccc}
		\hline
		 & measured & exact \\
		\hline
		$\mathcal{M}_2(K+\delta K)$ & 8.98 & 13.76 \\
		$\mathcal{M}_2(K)$ & 8.98 & 13.76 \\
		$\mathcal{M}_2(L)$ & 1.17 & 1.77 \\
		$\mathcal{M}_2(\bar{L})$ & 113.03 & 170.50 \\
		\hline
		\end{tabular}
		\caption{Identification of the spectral moments for the neural network of C. Elegans with nonlinear non-identical units. The results show a good approximation of the first moment, providing a fair estimate of the mean number of connections in the network. However, the second spectral moment is underestimated.}
		\label{tab:example_Celegans}
\end{table}

\subsection{Mutual influence of two units}
\label{sec:example_karate}

Spectral network identification also allows to detect whether two vertices are part of the same connected network, and therefore dynamically influence each other.

Here we consider the Zachary's karate club network \cite{Zachary}, which describes friendships between $34$ people. The graph is unweighted, and we make it directed by randomly assigning a direction to each edge. The local dynamics at each vertex is a nonlinear consensus dynamics described by
\begin{equation*}
\dot{x}_k = 0.2 \tanh \left(\frac{1}{2} u_k \right) = 0.2 \tanh \left(\frac{1}{2} \sum_{j=1}^n W_{kj} (x_j-x_k) \right)
\end{equation*}
where the function $\tanh$ plays the role of a saturation. The state $x_k$ represents the opinion of unit $k$.

Measuring the opinion of $k=12$, we can obtain a good approximation of the dominant Laplacian eigenvalues of the graph. Another experiment using measurements of the opinion of $k=30$ yields similar results (Figure \ref{fig:example_karate}(a)), suggesting that units $12$ and $30$ belong to the same connected component of the network and have an influence on each other. Now the network is cut in two connected components by removing the edges $(1,	32)$, $(2,	31)$, $(3,	10)$, $(3,	28)$, $(3,	29)$, $(3,	33)$, $(9,	31)$, $(9,	33)$, $(9,	34)$, $(14,	34)$, and $(20,	34)$ (this corresponds to the actual split of the network described in \cite{Zachary}). Spectral identification performed on this disconnected network yields different results when the measurements are taken on unit $12$ or on unit $30$ (Figure \ref{fig:example_karate}(b)). In particular, Laplacian eigenvalues captured with measurements of one unit cannot be captured with measurements of the other unit. This indicates that the two units do not belong to the same connected component and have no influence on each other.
\begin{figure}[htbp]
\centering
\subfigure[Without cut]{\includegraphics[width=0.45\columnwidth]{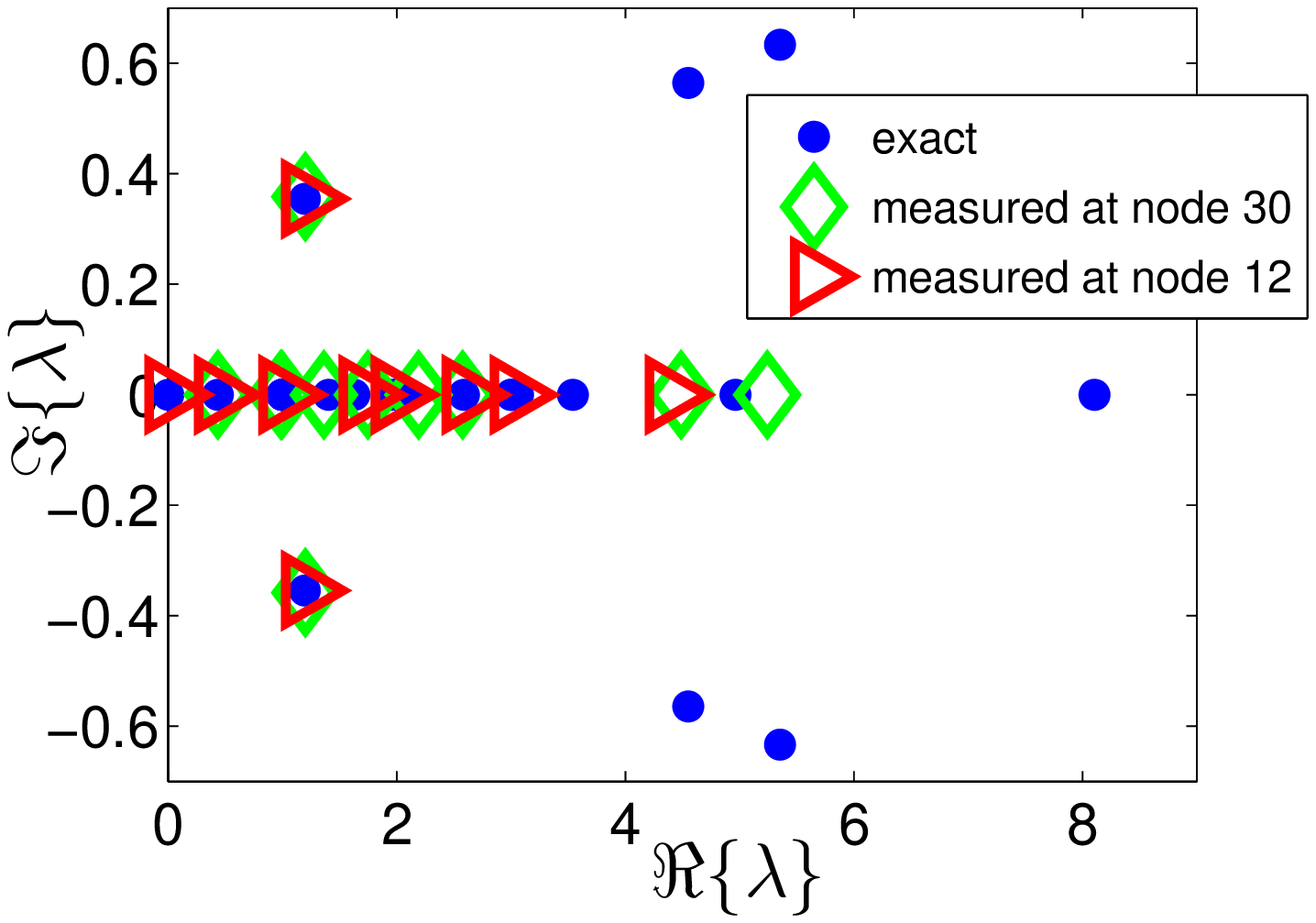}}
\subfigure[With cut]{\includegraphics[width=0.45\columnwidth]{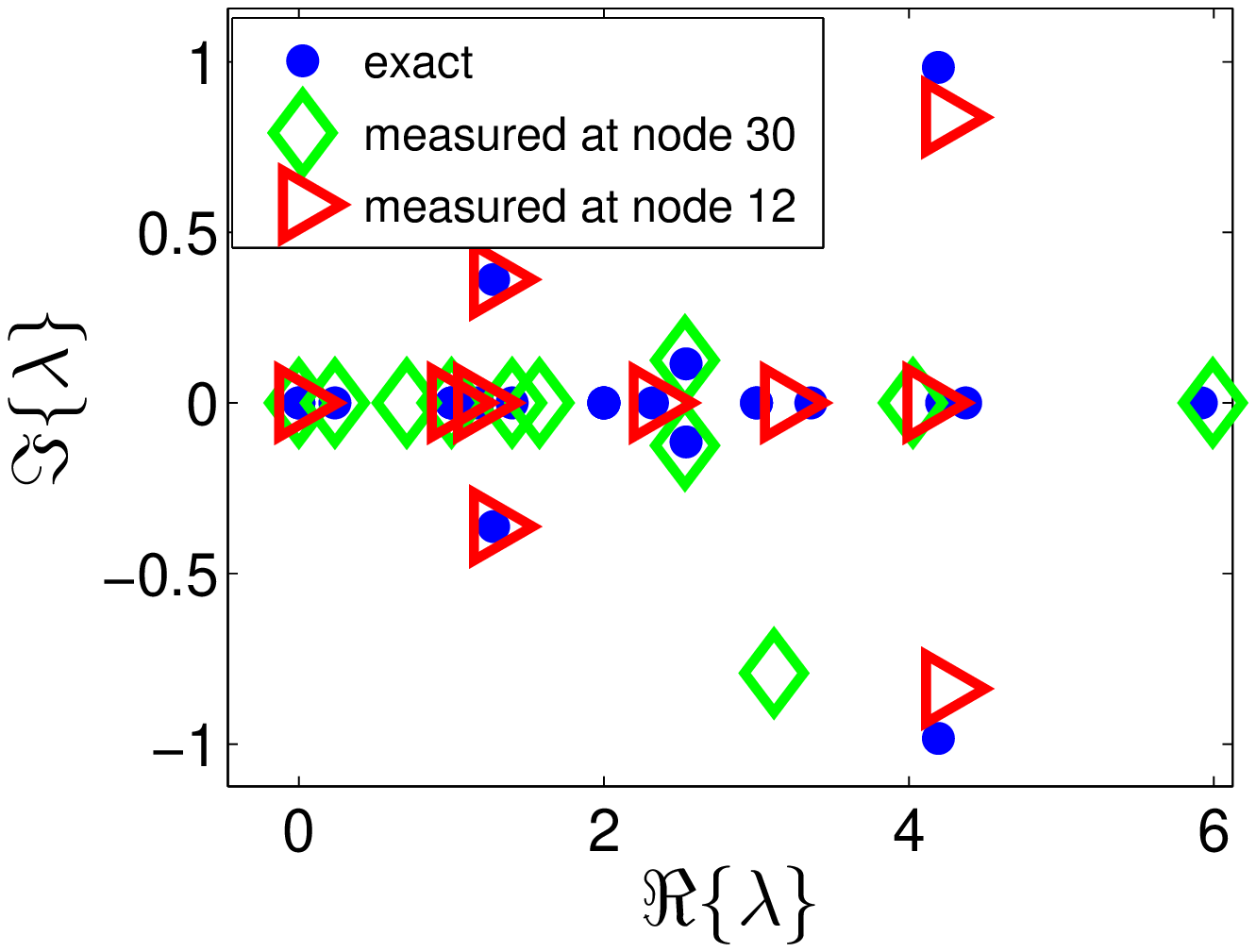}}
\caption{Spectral network identification of the karate club network with nonlinear consensus dynamics. Simulation parameters are given in Appendix \ref{app_examples}. (a) The network is connected, so that similar results are obtained with measurements of units $12$ or $30$. (b) The network is cut in two connected components. The sets of Laplacian eigenvalues obtained with the measurements at unit $12$ and unit $30$ are almost disjoint, showing that the two units do not belong to the same connected component of the network.}
\label{fig:example_karate}
\end{figure}

This example shows that vertices belonging to different connected components of a network capture different subsets of the Laplacian spectrum. Similarly, vertices belonging to different groups (or communities) in the network might also \guillemets{feel} different subsets of the Laplacian spectrum. This suggests that spectral network identification could be used for community detection and network partitioning; see e.g. \cite{Raak}.

\section{Conclusion}
\label{sec:conclu}

This paper is a first step toward the development of a novel framework for the identification of networks, which focuses on spectral graph-theoretic properties rather than full graph topology. Spectral network identification reveals global information on the network with only sparse local measurements, a key feature that is of great interest when dealing with large real networks. 

We have obtained very encouraging results. Numerical examples show for example that one can estimate the mean number of connections and the minimum/maximum vertex degree, with a better accuracy than through direct observations. Moreover, one can detect the addition of a vertex, and measure whether two units interact indirectly in the same connected component of the network.

Our main theoretical results highlight the connection between the spectral properties of the network and the spectral properties of the collective dynamics expressed within the framework of the Koopman operator. When the network is composed of non-identical units, we showed that it is not possible to derive such an exact connection. However, a statistical approach focusing on spectral moments can overcome this limitation in the case of large networks.

\paragraph{Perspectives} The present study has focused on networks of (nonlinear) units interacting through a diffusive coupling and admitting a stable synchronized equilibrium. Further research perspective is to extend the spectral identification framework to other dynamics that exhibit more complex or non-synchronized behaviors (e.g. coupled oscillators). \alex{The theoretical framework can be easily extended to synchronized limit-cycle oscillators, but the numerical methods presented in this paper are not effective in this case and should be improved. For agents that do not synchronize, additional research is necessary to tackle some limitations that are also encountered in the case of non-identical agents (i.e. impossibility results, see Section \ref{sec:impossibility}).} Moreover, considering nonlinear couplings would be particularly relevant to many applications. Note that with different types of coupling, one could also capture other spectral properties, such as the spectrum of the adjacency matrix. In fine, the spectral identification framework should be applied to the analysis of real data, providing spectral markers to classify different networks. It could also be compared to recent methods for detecting a pathology or a fault in the network \cite{fault_detection}.

The numerical method developed in this paper relies on the DMD algorithm, which extracts spectral properties from measured collective dynamics of the network. Recent extensions of the DMD algorithm (e.g. extended DMD \cite{Rowley_EDMD}, de-biased DMD \cite{debiased_DMD}, compressed DMD \cite{compressed_DMD}) or the related Arnoldi-based method (\cite{Yoshi_prony}) could be implemented in the context of spectral network identification. The DMD algorithm is primarily designed to capture the (Koopman) modes of the system, which are not needed in the present study. Future theoretical research could attempt to exploit these modes, as it is done for instance in \cite{Raak} for network partitioning. More importantly, the DMD algorithm is not always accurate and effective, \alex{in particular for strongly nonlinear systems or when initial conditions are far from the equilibrium.} The performances of the method, which might also depend on the network structure (e.g. graph diameter), should therefore be further investigated. \alex{In addition, a novel numerical scheme could be required, which is well-suited to strong nonlinearities and focuses on (Koopman) eigenvalues instead of (Koopman) modes, since the latter are the main quantities of interest.} This new scheme could also be combined with other techniques, such as machine learning, to post-process the obtained spectral data and obtain an upper bound on the error as well as a better estimate of the spectral moments. This could lead to advanced and automatic methods that would significantly increase the potential of spectral network identification.

\appendix

\section{Proofs}
\label{sec:app_proofs}

\subsection{Proof of the inequalities \eqref{quad_mean_deg}}
We have
\begin{equation}
\label{inequa_M2}
\mathcal{M}_2(L)= \frac{1}{n} \sum_{i,j} L_{ij} L_{ji} = \frac{1}{n} \sum_i d_i^2 + \frac{1}{n} \sum_{\substack{i,j \\ i \neq j}} W_{ij} W_{ji} \geq \frac{1}{n} \sum_i d_i^2 = \mathcal{D}_2(\mathcal{G})
\end{equation}
since $W_{ij} \geq 0$. Moreover, the inequality $2 W_{ij} W_{ji} \leq W_{ij}^2+W^2_{ji}$ for all $i\neq j$ implies that
\begin{equation*}
\sum_{\substack{i,j \\ i \neq j}} W_{ij} W_{ji} \leq \sum_{\substack{i,j \\ i \neq j}} W_{ij}^2 \leq \sum_i d_i^2
\end{equation*}
and it follows from \eqref{inequa_M2} that
\begin{equation*}
\mathcal{M}_2(L) \leq \frac{2}{n} \sum_i d_i^2
\end{equation*}
or equivalently $\mathcal{D}_2(\mathcal{G}) \geq \mathcal{M}_2(L)/2$. Finally, we have
\begin{equation*}
(\mathcal{M}_1(L))^2 = \frac{1}{n^2} \sum_i d_i^2 + \frac{1}{n^2} \sum_{\substack{i,j \\ i \neq j}} d_i d_j \leq \frac{1}{n^2} \sum_i d_i^2 + \frac{n-1}{n^2} \sum_i d_i^2 = \mathcal{D}_2(\mathcal{G})\,.
\end{equation*}

\subsection{Proof of Proposition \ref{prop_rand_delta_K}}
\paragraph{Expectation of $\widehat{\mathcal{M}_1}(L)$} We have
\begin{equation*}
\mathbb{E}(\tr(K+\delta K)) = \tr(K) + \mathbb{E}(\tr(\delta K)) = \tr(K)
\end{equation*}
since $\mathbb{E}(\tr(\delta K))=0$ (see \eqref{random_delta_K}), and equivalently
\begin{equation*}
\mathbb{E}(\mathcal{M}_1(K+\delta K)) = \mathcal{M}_1(K) \,.
\end{equation*}
Then it follows from \eqref{M1_L} that $E(\widehat{\mathcal{M}_1}(L)) = \mathcal{M}_1(L)$.\\

\paragraph{Expectation of $\widehat{\mathcal{M}_2}(L)$} It follows that
\begin{equation*}
\begin{split}
\mathbb{E}(\tr(K+\delta K)^2) & = \tr(K^2) + \mathbb{E}(\tr(\delta K^2)) + 2 \mathbb{E} ( \tr (K\, \delta K)) \\
& = \tr(K^2) + \mathbb{E} \Big (\sum_{i,j} \delta K_{ij} \, \delta K_{ji} \Big ) + 2 \mathbb{E} (\sum_{i,j} K_{ij} \, \delta K_{ji} \Big ) \\
& = \tr(K^2) + mn\, s^2
\end{split}
\end{equation*}
where the last equality follows from $\mathbb{E}(\tr(\delta K))=0$ and \eqref{random_delta_K}. Then we have
\begin{equation*}
\mathbb{E}(\mathcal{M}_2(K+\delta K)) - s^2 = \mathcal{M}_2(K)
\end{equation*}
and it follows from \eqref{M2_L} that $E(\widehat{\mathcal{M}_2}(L)) = \mathcal{M}_2(L)$.

\paragraph{Variance of $\widehat{\mathcal{M}_1}(L)$} We obtain
\begin{equation*}
\var(\tr(K+\delta K)) = \mathbb{E}(\tr(\delta K))^2 = \mathbb{E} \Big (\sum_{i,j} \delta K_{ii} \, \delta K_{jj} \Big) = mn \, s^2
\end{equation*}
or equivalently
\begin{equation*}
\var(\mathcal{M}_1(K+\delta K)) = \frac{s^2}{mn} \,.
\end{equation*}
Then it follows that
\begin{equation*}
\var(\widehat{\mathcal{M}_1}(L)) = \frac{m^2}{(C^TB)^2} \var(\mathcal{M}_1(K+\delta K)) = \frac{m s^2}{n (C^TB)^2}\,.
\end{equation*}

\paragraph{Variance of $\widehat{\mathcal{M}_2}(L)$}
It follows from \eqref{M1_estim} and \eqref{M2_estim} that
\begin{equation}
\label{terms_variance}
\begin{split}
\var(\widehat{\mathcal{M}_2}(L)) = \mathcal{O} \Big(& \var(\mathcal{M}_1(K+\delta K)) + \var(\mathcal{M}_2(K+\delta K)) \\
& \qquad \qquad +  \cov(\mathcal{M}_1(K+\delta K),\mathcal{M}_2(K+\delta K)) \Big) \,.
\end{split}
\end{equation}
We consider each term of \eqref{terms_variance} separately.\\\\
\emph{1. Term $\var(\mathcal{M}_1(K+\delta K))$ in \eqref{terms_variance}.} We have shown that
\begin{equation}
\label{scale_var1}
\var(\mathcal{M}_1(K+\delta K))=\mathcal{O}\left(\frac{1}{n}\right) \,.
\end{equation}
\emph{2. Term $\var(\mathcal{M}_2(K+\delta K))$ in \eqref{terms_variance}.} We have
\begin{equation*}
\begin{split}
\var(\tr(K+\delta K)^2) & = \mathbb{E}\left((\tr(K +\delta K)^2)^2\right) - \left(\mathbb{E}(\tr(K+\delta K)^2)\right)^2 \\
& = \mathbb{E}\left(\tr(K^2) + \tr(\delta K^2) + 2 \tr(K \, \delta K)\right)^2 \\
& \qquad - \left(\tr(K^2) + \mathbb{E}(\tr(\delta K^2)) + 2 \mathbb{E}(\tr(K \, \delta K))\right)^2 \\
& = \mathbb{E}(\tr(\delta K^2))^2 + 4 \, \mathbb{E}(\tr(K \, \delta K))^2 + 4 \mathbb{E}(\tr(\delta K^2) \, \tr(K\, \delta K)) \\
& \qquad - (\mathbb{E}(\tr(\delta K^2)))^2 - 4\, (\mathbb{E}(\tr(K \, \delta K)))^2 - 4 \, \mathbb{E}(\tr(\delta K^2)) \, \mathbb{E}(\tr(K \, \delta K)) \\
& = \mathbb{E} \Big(\sum_{i,j} \sum_{i',j'} \delta K_{ij} \, \delta K_{ji} \, \delta K_{i'j'} \, \delta K_{j'i'} \Big) + 4 \, \mathbb{E} \Big(\sum_{i,j} \sum_{i',j'} K_{ij} \, \delta K_{ji} \, K_{i'j'} \, \delta K_{j'i'} \Big) \\
& \qquad + 4 \, \mathbb{E} \Big(\sum_{i,j} \sum_{i',j'} \delta K_{ij} \, \delta K_{ji} \, K_{i'j'} \, \delta K_{j'i'} \Big) - n^2 m^2\, s^4 \\
& = \sum_i \mathbb{E}(\delta K_{ii}^4) + \sum_{\substack{i,i' \\ i \neq i'}} \mathbb{E}(\delta K_{ii}^2) \, \mathbb{E}(K_{i'i'}^2) + 2 \sum_{\substack{i,j \\ i\neq j}} \mathbb{E}(\delta K_{ij}^2) \, \mathbb{E}(\delta K_{ji}^2) \\
& \qquad +  4 \sum_{i,j} K_{ij}^2  \, \mathbb{E}(\delta K_{ji}^2) + 4 \sum_i K_{ii} \, \mathbb{E}(\delta K_{ii}^3) - n^2 m^2\, s^4 \\
& = mn \, \mathbb{E}(\delta K_{ii}^4) + mn (mn-1) \, s^4 + 2 mn(m-1) s^4 +4 s^2 \sum_{\substack{i,j \\ \delta K_{ij} \neq 0}} K_{ij}^2  \\
& \qquad + 4 \mathbb{E}(\delta K_{ii}^3) \, \tr(K) - n^2 m^2\, s^4
\end{split}
\end{equation*}
where we used $\mathbb{E}(\tr(\delta K))=0$, $\mathbb{E}(\tr(K\, \delta K))=0$, and the block-diagonal structure of $\delta K$. It follows that
\begin{equation*}
\begin{split}
\var(\mathcal{M}_2(K+\delta K)) & = \frac{\mathbb{E}(\delta K_{ii}^4)}{mn} +  s^4 \frac{m-2}{mn} + s^2 \frac{4}{m^2n^2}  \sum_{\substack{i,j \\ \delta K_{ij} \neq 0}} K_{ij}^2  + 4 \mathbb{E}(\delta K_{ii}^3) \frac{\tr(K)}{m^2 n^2} \\
& = \mathcal{O} \left(\frac{1}{n}+\frac{1}{n^2} \sum_{\substack{i,j \\ \delta K_{ij} \neq 0}} K_{ij}^2 + \frac{\mathcal{D}_1(\mathcal{G})}{n} \right)
\end{split}
\end{equation*}
where we used $\tr(K)/n=\mathcal{M}_1(K)=\mathcal{O}(\mathcal{M}_1(L))=\mathcal{O}(\mathcal{D}_1(\mathcal{G}))$. It is also easy to see that
\begin{equation*}
\begin{split}
\frac{1}{n^2} \sum_{\substack{i,j \\ \delta K_{ij} \neq 0}} K_{ij}^2 & = \frac{\tr(AA^T)}{n} + \frac{1}{n^2} \left(\sum_{i} L_{ii}^2 \right) \tr(BC^TCB^T) - \frac{2}{n^2} \tr(L) \tr(A^T B C^T) \\
& = \mathcal{O} \left(\frac{1}{n}+ \frac{\mathcal{D}_2(\mathcal{G})}{n} + \frac{\mathcal{D}_1(\mathcal{G})}{n} \right)
\end{split}
\end{equation*}
so that
\begin{equation}
\label{scale_var2}
\var(\mathcal{M}_2(K+\delta K)) = \mathcal{O} \left(\frac{1}{n}+ \frac{\mathcal{D}_1(\mathcal{G})}{n} + \frac{\mathcal{D}_2(\mathcal{G})}{n} \right)\,.
\end{equation}
\emph{3. Term $\cov(\mathcal{M}_1(K+\delta K),\mathcal{M}_2(K+\delta K))$ in \eqref{terms_variance}.} We have
\begin{equation*}
\begin{split}
\cov(\tr(K+\delta K),\tr(K+\delta K)^2) & = \mathbb{E}(\tr(K+\delta K) \tr(K+\delta K)^2) - \mathbb{E}(\tr(K+\delta K)) \mathbb{E}(\tr(K+\delta K)^2) \\
& = \mathbb{E}(\tr(\delta K) \tr(K+\delta K)^2) \\
& = \mathbb{E}(\tr(\delta K) \tr(\delta K^2)) + 2 \mathbb{E}(\tr(\delta K) \tr(K \delta K)) \\
& = \mathbb{E} \left( \sum_{i,i',j'} \delta K_{ii} \delta K_{i'j'} \delta K_{j'i'} \right) + 2 \mathbb{E} \left(\sum_{i,i',j'} \delta K_{ii} K_{i'j'} \delta K_{j'i'} \right) \\
& = mn \, \mathbb{E}(\delta K_{ii}^3) + 2 s^2 \tr(K)
\end{split}
\end{equation*}
or equivalently
\begin{equation}
\label{scale_var3}
\cov(\mathcal{M}_1(K+\delta K),\mathcal{M}_2(K+\delta K)) = \frac{\mathbb{E}(\delta K_{ii}^3)}{mn} + 2 s^2 \frac{\mathcal{M}_1(K)}{mn} = \mathcal{O} \left(\frac{1}{n} + \frac{\mathcal{D}_1(\mathcal{G})}{n} \right) \,.
\end{equation}

The result follows from \eqref{scale_var1}-\eqref{scale_var2}-\eqref{scale_var3}.

\subsection{Proof of Proposition \ref{prop:rand_weights}}
We first note the relationship between the weighted Laplacian matrix $L$ and the unweighted Laplacian matrix $\bar{L}$:
\begin{equation}
\label{unweighted_L}
\begin{array}{rcll}
L_{ij} & = & \bar{L}_{ij} W_{ij} & i \neq j \\
L_{ii} & = &  - \displaystyle \sum_{j \neq i} \bar{L}_{ij} W_{ij}  &
\end{array}\,.
\end{equation}

\paragraph{First spectral moment} It follows from \eqref{unweighted_L} that
\begin{equation*}
\mathbb{E}(\tr(L))=\mathbb{E}\Big(-\sum_{\substack{i,j \\ i \neq j}} \bar{L}_{ij} W_{ij} \Big) = - r \sum_{\substack{i,j \\ i \neq j}} \bar{L}_{ij} = r \, \tr(\bar{L})
\end{equation*}
or equivalently
\begin{equation}
\label{mom_unw1}
\mathbb{E}(\mathcal{M}_1(L))= r \, \mathcal{M}_1(\bar{L})\,.
\end{equation}

We have also
\begin{equation*}
\begin{split}
\var(\tr(L)) & = \mathbb{E}(\tr(L))^2 - \left( \mathbb{E}(\tr(L)) \right)^2 \\
& = \mathbb{E} \Big( \sum_{\substack{i,j \\ i \neq j}} \sum_{\substack{i',j' \\ i' \neq j'}} \bar{L}_{ij} \bar{L}_{i'j'} W_{ij}  W_{i'j'}   \Big) - r^2 \, (\tr(\bar{L}))^2 \\
& = s^2 \sum_{\substack{i,j \\ i \neq j}} \bar{L}_{ij}^2 + r^2 \sum_{\substack{i,j \\ i \neq j}} \sum_{\substack{i',j' \\ i' \neq j'}} \bar{L}_{ij} \bar{L}_{i'j'} - r^2 \, (\tr(\bar{L}))^2 \\
& = s^2 \tr(\bar{L})
\end{split}
\end{equation*}
where we used $\sum_{\substack{i,j \\ i \neq j}} \bar{L}_{ij}^2 = -\sum_{\substack{i,j \\ i \neq j}} \bar{L}_{ij} = \tr(\bar{L})$ and $\sum_{\substack{i,j \\ i \neq j}} \sum_{\substack{i',j' \\ i' \neq j'}} \bar{L}_{ij} \bar{L}_{i'j'} = (\tr(\bar{L}))^2$.\\
It follows that
\begin{equation*}
\var{} \left(\frac{\mathcal{M}_1(L)}{r}\right) = \frac{s^2}{r^2} \frac{\tr(\bar{L})}{n^2} = \frac{s^2}{r^2} \frac{\mathcal{D}_1(\bar{\mathcal{G}})}{n} < \frac{s^2}{r^2} \,.
\end{equation*}

\paragraph{Second spectral moment} We have
\begin{equation*}
\begin{split}
\mathbb{E}(\tr(L^2)) & = \mathbb{E} \Big(\sum_{\substack{i,j \\ i \neq j}} L_{ij} L_{ji} + \sum_i L_{ii}^2 \Big)\\
& = \mathbb{E} \Big(\sum_{\substack{i,j \\ i \neq j}} \bar{L}_{ij} \bar{L}_{ji} W_{ij} W_{ji} + \sum_i \sum_{j\neq i} \sum_{j'\neq i} \bar{L}_{ij} \bar{L}_{ij'} W_{ij} W_{ij'} \Big) \\
& = r^2 \sum_{\substack{i,j \\ i \neq j}} \bar{L}_{ij} \bar{L}_{ji} + r^2 \sum_i \sum_{j\neq i} \sum_{j'\neq i} \bar{L}_{ij} \bar{L}_{ij'} + s^2 \sum_{\substack{i,j \\ i \neq j}} \bar{L}_{ij}^2 \\
& = r^2 \, \tr(\bar{L}^2) + s^2 \tr(\bar{L})\,
\end{split}
\end{equation*}
or equivalently
\begin{equation}
\label{mom_unw2}
\mathbb{E}(\mathcal{M}_2(L)) = r^2 \, \mathcal{M}_2(\bar{L}) + s^2 \, \mathcal{M}_1(\bar{L}) \,.
\end{equation}
Using \eqref{mom_unw1} and \eqref{mom_unw2}, we get
\begin{equation*}
\mathbb{E}\left(\frac{\mathcal{M}_2(L)}{r^2}-\frac{s^2 \, \mathcal{M}_1(L)}{r^3}\right) = \mathcal{M}_2(\bar{L}) \,.
\end{equation*}

\section{Algorithms}
\label{app_algo}
The general procedure for spectral network identification is summarized in Algorithm \ref{alg:spectral_ident}. The DMD algorithm is described in Algorithm \ref{alg:dmd} (see also \cite{Schmid,Tu} for more details).

\begin{algorithm}[h]
	\caption{Spectral network identification}
	\label{alg:spectral_ident}
	\begin{algorithmic}[1]
\State Choose an observation function $f$ and measure $r$ time series $f(X^{(j)}(t))$;
\State Choose the parameters $K$, $\Delta t$ and obtain the snapshots \eqref{time_series} from the times series;
\State Choose the shift parameters $c$ and $\delta$ and construct the data matrix $Z$;
\State Apply the DMD algorithm to the data matrix $Z$ and obtain the DMD eigenvalues $\tilde{\nu}_k$ and the associated eigenvalues $\tilde{\mu}_k=\log(\nu_k)/\Delta t$ (see Appendix \ref{app_algo});
\State Optional: remove outliers (characterized by large real part or large imaginary part);
\State Optional: if the units are identical: use \eqref{sigma_L}-\eqref{g} to obtain exact Laplacian eigenvalue(s) (e.g. spectral gap $\lambda_2$ and spectral radius $\lambda_n$);
\State Use k-means clustering to identify $n_c$ clusters of eigenvalues $\tilde{\mu}_k$ and remove outliers (optional);
\State Compute the convex hull $\mathcal{S}_j$ of each cluster and the moments of area \eqref{moments_area};
\State Compute the spectral moments of $K$ using \eqref{area_moment_K};
\State If the units are identical, compute the spectral moments of $L$ using \eqref{M1_L} and \eqref{M2_L}; if the units are non-identical, use \eqref{M1_estim} and \eqref{M2_estim};
\State If the distributions of the edges weight is known, compute the spectral moments of $\bar{L}$ using \eqref{M1_unw}-\eqref{M2_unw}.
	\end{algorithmic}
\end{algorithm}

\begin{algorithm}[h]
	\caption{Dynamic Mode Decomposition}
	\label{alg:dmd}
	\begin{algorithmic}[1]
\Statex{\bf Input:} Data matrix $Z=[z_0 \cdots z_{q_2}] \in \mathbb{R}^{q_1 \times (q_2+1)}$.
\Statex{\bf Output:} DMD modes $\phi_k$ and associated DMD eigenvalues $\tilde{\nu}_k$.
\State
Construct the matrices
\begin{equation*}
X=[z_0 \cdots z_{q_2-1}]\in \mathbb{R}^{q_1 \times q_2} \qquad Y=[z_1 \cdots z_{q_2}]\in \mathbb{R}^{q_1 \times q_2}\,;
\end{equation*}
\State Compute the reduced singular value decomposition of $X$, i.e. $X = U \Sigma V^*$;
\State Construct the matrix $\tilde{T} = U^* Y V \Sigma^{-1}$;
\State Compute the eigenvectors $w_k$ and eigenvalues $\tilde{\nu}_k$ of $\tilde{T}$, i.e. $\tilde{T} w_k = \tilde{\nu}_k w_k$;
\State The DMD modes are given by $\phi_k = U w_k$ and the associated DMD eigenvalues are $\tilde{\nu}_k$.
	\end{algorithmic}
\end{algorithm}

Note that, instead of using Algorithm \ref{alg:dmd}, one could directly consider the spectral decomposition of $T = Y X^\dagger$, where $X^\dagger$ denotes the Moore-Penrose pseudoinverse of $X$. However, this procedure is less efficient that the standard DMD algorithm (Algorithm \ref{alg:dmd}).

\section{Simulation parameters}
\label{app_examples}

The input matrix used with the DMD algorithm depends on several parameters. The parameters used for the numerical experiments shown in this paper are given in the following table.

\begin{table}[h]
	\centering
		\begin{tabular}{ccccccc}
		\hline
   & distribution IC & \# IC & \# snapshots & sampling period & shift \\
	& & $r$ & $K$ &  $\Delta t$ & $c,\,\delta$ \\
	\hline
	Example \ref{example1} & normal $N(0,1)$ & 10 & 50 & 0.4 & 2, 5 \\
	Example \ref{example2} & uniform on $[-0.5,0.5]$ & 10 & 50 & 0.8 & 3, 5 \\
	Examples \ref{example3},\,\ref{example5} & normal $N(0,1)$ & 10 & 50 & 0.4 & 2, 5 \\
	Example \ref{example4} & normal $N(0,1)$ & 10 & 50 & 0.2 & 2, 5 \\
	Section \ref{sec:example6} & uniform on $[-0.5,0.5]$ & 5/10 & 50/40 & 1/0.6 & 2, 5 \\
	Section \ref{sec:example_Celegans} & uniform on $[-0.5,0.5]$ & 10 & 75 & 0.2 & 2, 5 \\
	Section \ref{sec:example_karate} & uniform on $[-1,1]$ & 20 & 100 & 1 & 2, 5 \\
	\hline
		\end{tabular}
		\caption{}
\end{table}

Adjacency matrix of the network considered in Example \ref{example1} and Example \ref{example2}:

\begin{equation*}
W=\mat{cccccccccc}{
         0  &  0.1466  &  0.0075  &  0.0238    &    0  &  0.1048  &  0.1446  &  0.1913    &     0  &  0.1758 \\
    0.1780  &  0 &   0.1909  &  0.1436    &     0    &     0  &  0.1696   &      0  &  0.1865  &  0.0153 \\
    0.1470  &  0.1313  &  0  &  0.0175  &  0.0575   & 0.0651    &     0  &  0.0009  &  0.1297 &  0.1010 \\
    0.1021  &  0.1326  &  0.1169  &  0   & 0.0374   & 0.1238    &     0     &    0  &  0.0061 &   0.0342 \\
    0.0929  &  0.1052   &      0  &  0.0200  & 0    &     0   & 0.1646   &      0    &     0  &  0.0182 \\
    0.0058  &       0    &     0   & 0.0795  &  0.0557  &  0  &       0  &  0.0892  &  0.1143  &  0.0165 \\
    0.0850  &  0.1551   & 0.1060   & 0.1301   &      0  &  0.0149  &       0  &  0.1748  &  0.1631  &  0.0509 \\
    0.0524  &       0   &      0   & 0.1887   & 0.0079  &  0.1701  &       0  &       0  &  0.0632   & 0.1182 \\
         0  &  0.1926   & 0.1678   & 0.0500   & 0.1875  &       0  &  0.1289  &       0   &      0  &  0.0557 \\
         0  &  0.0028   & 0.1156   &      0   & 0.0236  &  0.0635  &       0  &  0.0228   &      0  &  0 }
\end{equation*}
 

\section*{Acknowledgments}

The authors thank S. Kolumban for his insightful comments and helpful discussion on the manuscript.

\bibliographystyle{siamplain}

\end{document}